\theoremstyle{definition}
\newtheorem{defn}{Definition}
\newtheorem{rem}{Remark}
\newtheorem{ex}{Example}
\theoremstyle{plain}
\newtheorem{thm}{Theorem}
\newtheorem{lem}{Lemma}
\newtheorem{prop}{Proposition}
\newtheorem{cor}{Corollary}
\newcommand{\RR}{\mathbb{R}}
\newcommand{\NN}{\mathbb{N}}
\newcommand{\NNo}{\mathbb{N}\cup \{0\}}
\def\states{\mathcal X}
\def\state{x}
\def\cset{\mathcal M}
\def\allgambles{\mathcal L}
 \newcommand{\low}[1]{{\underline{#1}}}
 \newcommand{\up}[1]{{\overline{#1}}}
\title{Perturbation bounds and degree of imprecision for uniquely convergent imprecise Markov chains}
\author[1]{Damjan \v{S}kulj \\ University of Ljubljana, Faculty of Social Sciences \\ Kardeljeva pl. 5, SI-1000 Ljubljana, Slovenia \\ \href{mailto:damjan.skulj@fdv.uni-lj.si}{\tt damjan.skulj@fdv.uni-lj.si} }
\begin{document}

\maketitle

\begin{abstract}
	The effect of perturbations of parameters for uniquely convergent imprecise Markov chains is studied. We provide the maximal distance between the distributions of original and perturbed chain and maximal degree of imprecision, given the  imprecision of the initial distribution. The bounds on the errors and degrees of imprecision are found for the distributions at finite time steps, and for the stationary distributions as well. 
	
	\smallskip\noindent
	{\bfseries Keywords.} imprecise Markov chain, sensitivity of imprecise Markov chain, perturbation of imprecise Markov chains, degree of imprecision, weak coefficient of ergodicity 
	
	\smallskip\noindent
	2010 Mathematics Subject Classification: 60J10	
\end{abstract}

\section{Introduction}
Markov chains depend on a large number of parameters whose values are often subject to uncertainty. In the long run even small changes in the initial distribution or transition probabilities may cause large deviations. Several approaches to cope with uncertainty and estimate its magnitude have therefore been developed.  Perturbation analysis gives estimates for the differences between probability distributions when the processes evolve in time (\cite{Mitrophanov2005, Kartashov1986, Kartashov1986a, rudolf2015perturbation}) or for the stationary distributions (\cite{cho2001comparison, mouhoubi2010new}) based on the differences in parameters (see also \cite{Mitrophanov2003, mitrophanov2005ergodicity} for the continuous time case). 

In the recent decades variety of models of \emph{imprecise probabilities}  \cite{augustin2014introduction} have been developed. They provide means of expressing probabilistic uncertainty in the way that no reference to particular precise models is needed. The results thus reflect exactly the amount of uncertainty that results from the uncertainty in the inputs. Uncertainty in the parameters of Markov chains has first been addressed with models of this kind by \citet{hart:98}, without formally connecting it to the theory of imprecise probabilities, although applying similar ideas. More recently, \citet{decooman-2008-a} formally linked Markov chains with the theory of upper previsions, while \citet{skulj:09} proposed an approach based on the theory of interval probabilities. Unique convergence of imprecise Markov chains has been further investigated by \citet{hermans2012characterisation}, where ergodicity of upper transition operators has been characterized in several ways, and by \citet{2013:skulj-hable}, who investigated the  generalization of coefficients of ergodicity. \citet{skulj:13b} also studied the structure of non-uniquely convergent imprecise Markov chains. %For studies of convergence and ergodicity of imprecise Markov chains see also \citet{lopatatzidis2015robust}.  

Although at first glance models of imprecise probabilities and classical perturbation models seem to have the same objective, which they approach from different angles, this is often not so. In fact they answer essentially different questions. While imprecise probabilities provide models that replace classical probabilities with generalized models that are capable to reflect uncertainty or imprecision, perturbation models give the numerical information on how much uncertainty in results to expect, given the amount of uncertainty in inputs. Moreover, it is even quite natural to involve perturbation analysis to the models of imprecise probabilities. 

The goal of this paper is thus to apply results from perturbation theory to imprecise Markov chains. There are two main reasons for this. The first is, that imprecise models too, are sensitive to the changes of input parameters. In the case of imprecise Markov chains this means that the changes in the bounds of input models affect the bounds of the distributions at further times in a similar way as in the case with the precise models. The other, maybe even more important reason is that currently in the theory of imprecise probabilities there has been little attention paid to the 'degree of imprecision', that is the maximal distances between, say, lower and upper bounds of probabilities. At least in comparison with the attention received by the methods for calculating these bounds. We thus also estimate how the 'degree of imprecision' evolves in time for imprecise Markov chains. 

The paper is structured as follows. In Sc.~\ref{s-imc} we introduce imprecise Markov chains with a special emphasis on the representation of the probability distributions after a number of time steps. In Sc.~\ref{s-mpio} we introduce the metric properties of imprecise operators, which allow us to measure the distances between imprecise probability distributions. By the means of the distances we also define the degree of imprecision. We show that with exception of the special case of 2-alternating upper probabilities (or 2-monotone lower probabilities) it is hard to find the exact distance between two imprecise probability models. In Sc.~\ref{s-pio} we analyse the effects of perturbations of parameters to the deviations of the perturbed chains from the original ones. With a similar method we also study how the 'degree of imprecision' of the process grows in time. In Sc.~\ref{s-ex} we apply the analysis to the case of contamination models and give a numerical example.

\section{Imprecise Markov chains}\label{s-imc}
\subsection{Imprecise distributions and upper expectation functionals}
Let $\states$ be a finite set of \emph{states}. A \emph{probability 
distribution} of some random variable $X$ over $\states$ is given in terms of 
an \emph{expectation functional} $E$ on the space of real-valued maps $f$ on 
$\states$, which we will denote by $\allgambles(\states)$: 
\begin{equation}
 E(f) = \sum_{x\in \states} P(X = x) f(x). 
\end{equation}
An \emph{imprecise probability distribution} of a random variable $X$ is given 
in terms of a closed convex set of expectation functionals $\cset$, called a 
\emph{credal set}. To every imprecise probability distribution a unique 
\emph{upper expectation functional}  
\begin{equation}
 \up E(f) = \sup_{E\in \cset} E(f) 
\end{equation}
can be assigned. Upper expectation functionals are in a one-to-one correspondence with credal sets. Moreover, a functional $\up E$ is an upper expectation functional with respect to a credal set $\cset$ if and only if it satisfies the following properties: 
\begin{enumerate}[(i)]
	\item $\min_{\state\in\states} f(\state) \le \up E(f)\le \max_{\state\in\states} f(\state)$ (boundedness);
	\item $\up E(f_1+f_2) \le \up E(f_1) + \up E(f_2)$ (subadditivity);
	\item $\up E(\lambda f) = \lambda\up E(f)$ (non-negative homogeneity);
	\item $\up E(f + \mu 1_{\states}) = \up E(f) + \mu$ (constant additivity);
	\item if $f_1\le f_2$ then $\up E(f_1) \le \up E(f_2)$ (monotonicity),
\end{enumerate}
where $f, f_1, f_2\in \allgambles(\states)$ are arbitrary, $\lambda$ a 
non-negative real constant, $\mu$ an arbitrary real constant, and $1_{\states}$ 
the constant map 1 on $\states$. An upper expectation functional $\up E$ can be 
supplemented by a \emph{lower expectation functional} with respect to the same 
credal set by assigning:
\begin{equation}
 \low E(f) = \min_{E\in\cset} E(f). 
\end{equation}
The following duality relation holds:
\begin{equation}\label{eq-up-low-duality}
	\low E(f) = -\up E(-f)
\end{equation}
for every $f\in\allgambles(\states)$. A lower expectation functional $\low E$ 
satisfies the same properties (i)--(v) as the upper ones, except for (ii), where 
subadditivity is replaced by 
\begin{enumerate}[(ii)']
	\item $\low E(f_1+f_2) \ge \low E(f_1) + \low E(f_2)$ (superadditivity).
\end{enumerate}

\subsection{Representations of uncertainty}
In the previous section we described upper (and lower) expectation functionals, which uniquely represent convex sets of probability distributions. In principle such functionals posses properties similar to those of precise expectation functionals corresponding to precise probability distributions. However, there is a huge difference when it comes to the ways of specifying a convex set of probabilities compared to specifying a single probability distribution. In the latter case we need, in the general case, a single probability density function, or in the case finite spaces, a probability mass function. In the case of state spaces of finite Markov chains, we thus need to specify the probability of each state. In contrast, even in the case of finite spaces there are in general infinitely many values needed to specify a convex set of probability measures, or in the case of convex polytopes, this number is finite but often large. 

Every convex polytope can be represented by specifying its extreme points or as 
an intersection of a set of half spaces separated by hyperplanes. The latter is 
far more natural and useful in the case of imprecise probabilities. This 
approach is behind many particular models, such as \emph{lower} and 
\textit{upper probabilities}, \emph{probability intervals} or \textit{coherent 
lower} and \textit{upper previsions} \citep{augustin2014introduction, miranda:07, troffaes2014lower}. The most general of those models are 
lower and upper previsions, which generalize all the other models, including 
lower and upper expectation functionals. 

Although most researchers of imprecise probabilities list properties for lower 
previsions, in the theory of imprecise Markov chains, upper previsions are more 
often used. In general, an upper prevision $\up P\colon \mathcal K\to \RR$ is a 
map on some set $\mathcal K$ -- not necessarily a vector space -- of measurable 
maps $\states\to \RR$. What is important for our present model is the following 
equivalent definition of coherence for finite probability spaces. 
\begin{defn}
	Let $\mathcal K\subseteq \allgambles(\states)$ be a set of real-valued maps 
	on a finite set $\states$ and $\up P\colon \mathcal K\to \RR$ a mapping 
	such that there exists a closed and convex set $\mathcal M$ of 
	(precise/linear) expectation functionals such that $\up P(f) = \max_{P\in 
	\mathcal M}P(f)$. Then $\up P$ is a coherent upper prevision on $\mathcal 
	K$.  
\end{defn} 
Note that any coherent upper prevision $\up P$ allows canonical extension to 
entire $\allgambles(\states)$ by defining
\begin{equation}
\up E(f)=\max_{P\in \mathcal M}P(f).
\end{equation}
This upper expectation functional is called the \emph{natural extension} of 
$\up P$, and is clearly itself too a coherent upper prevision on $\mathcal 
L(\states)$. Upper expectation functionals thus form a subclass of coherent 
upper previsions. 

Another subclass of imprecise probability models are \emph{lower} and 
\textit{upper probabilities}. A lower and upper probability $\low P$ and $\up P$ respectively are defined as a pair of real-valued maps on a class $\mathcal A$ of subsets of $\states$. For every 
$A\in\mathcal A$, its probability is assumed to lie within $[\low P(A), \up 
P(A)]$. They too allow the formation of a credal set $\cset$, whose members are 
exactly those expectation functionals $E$ that satisfy the conditions $E(1_A) \ge \low P(A)$ 
and $E(1_A)\le \up P(A)$ for every $A\in \mathcal A$. If the bounds $\low P(A)$ 
and $\up P(A)$ are reachable by the members of $\cset$, then the lower/upper 
probabilities are said to be \emph{coherent}.

 When the lower and upper probabilities are defined on the set of elementary events 
 $A= \{x\}$, where $x\in\states$, we are talking about \textit{probability 
 intervals (PRI)}, which, if coherent, also form a subclass of coherent upper 
 previsions. 

When an upper probability $\up P$ satisfies the following equation:
\begin{equation}
 \up P(A \cup B) + \up P(A\cap B) \le \up P(A) + \up P(B) 
\end{equation}
we call it 2-\emph{alternating}. At the same time, the corresponding lower probability $\low P$, which satisfies equation $\low P(A) = 1-\up P(A^c)$, is 2-\emph{monotone}:
\begin{equation}
  \low P(A \cup B) + \low P(A\cap B) \ge \low P(A) + \low P(B). 
\end{equation}
If an imprecise probability model is given in the form of probability intervals $[\low p(x), \up p(x)]$ for all elements $x\in\states$, we can first extend the bounds to all subsets of $\states$ by 
\begin{equation}
\low P(A) = \max\left\{\sum_{x\in A} \underline p(x), 1-\sum_{x\in A^c} \overline p(x) \right\}, \text{ for every } A\subseteq \states.
\end{equation}
It can be shown that in this case $\low P$ is 2-monotone (see e.g. \cite{decampos:94}). 

\subsection{Calculating the natural extension as a linear programming problem} 
Given a (coherent)\footnote{Coherence is in fact not important for calculating the natural extension.} upper prevision $\up P$ on a set $\mathcal K$, its natural extension 
to $\allgambles(\states)$ can be calculated as a linear programming problem as 
follows:
\begin{quote}
Maximize
\begin{align}
	E(f) & = \sum_{x\in\states} p(x)f(x)
	\intertext{subject to} 
	\sum_{x\in\states} p(x) & = 1 \\
	\sum_{x\in\states} p(x)h(x) & \le \up P(h) 
	\end{align}
	for every $h\in \mathcal K$. 		
\end{quote} 
Although there exist efficient algorithms for solving linear programming 
problems, in the case of 2-monotone lower or 2-alternating upper probabilities 
the expectation bounds can be calculated even more efficiently by the use of 
\emph{Choquet integral} (see e.g.\cite{den:97}): 
\begin{align}
	\up E(f) & = \min f + \int_{\min f}^{\max f} \up P(f\ge x) dx 
	\intertext{and}
	\low E(f) & = \min f + \int_{\min f}^{\max f}  \low P(f\ge x)dx
\end{align} 

\subsection{Imprecise transition operators}
An \emph{(imprecise) Markov chain} $\{ X_n \}_{n\in \NNo}$ with the set of states $\states$ is specified by an initial (imprecise) distribution and an (imprecise) transition operator. A \emph{transition operator} assigns a probability distribution of $X_{n+1}$ conditional on $(X_n = x)$. As before, we represent the conditional distribution with a conditional expectation functional $T(\cdot|x)$, mapping a real-valued map on the set of states into $T(f|x)$. A transition operator $T\colon \mathcal L(\states)\to \mathcal L(\states)$ then maps $f\mapsto Tf$ such that
\begin{equation}
 Tf(x) = T(f|x). 
\end{equation}
That is, the value of $Tf(x)$ equals the conditional expectation of $f$ at time $n+1$ if the chain is in $x$ at time $n$. 

Replacing precise expectations $T(\cdot|x)$ with the imprecise ones given in terms of upper expectation functionals $\up T(\cdot|x)$, we obtain an imprecise transition operator $\up T\colon \mathcal L(\states)\to \mathcal L(\states)$ defined with 
\begin{equation}
\up Tf(x) = \up T(f|x).
\end{equation}
To every conditional upper expectation functional $\up T(\cdot | x)$ a credal set can be assigned, and therefore a set of transition operators $\mathcal T$ can be formed so that $\up Tf = \max_{T\in \mathcal T}Tf$. 

Given an upper expectation functional $\up E_0$ corresponding to the distribution of $X_0$ and an upper transition operator $\up T$, we obtain 
\begin{equation}
 \up E_n (f) = \up E_0(\up T^nf), 
\end{equation}
where $\up E_n$ is the upper expectation functional corresponding to the distribution of $X_n$. 
\begin{ex}\label{ex-1}
	Let an imprecise transition operator be given in terms of a lower and upper transition matrices:
	\begin{align}
	\low M = 
	\begin{bmatrix}
	0.33 & 0.33 & 0 \\
	0.33 & 0.17 & 0.25 \\
	0 & 0.5 & 0.42 
	\end{bmatrix} \qquad \text{and} \qquad 
	\up M = 
	\begin{bmatrix}
	0.67 & 0.67 & 0 \\
	0.58 & 0.42 & 0.5 \\
	0 & 0.58 & 0.5 
	\end{bmatrix}
	\end{align}
	The upper transition operator is then 
	\begin{equation}\label{ex-1-lp-T}
	\up Tf = \max_{\substack{\low M \le M \le \up M\\\noalign{\smallskip} M1_\states = 1_\states}} Mf,
	\end{equation}
	Thus, $\up Tf$ is the maximum of $Mf$ over all row stochastic matrices that lie between $\low M$ and $\up M$. Since each row can be maximized separately, the componentwise maximal vector does exist. 
	
	Further let 
	\begin{equation}
	\low P_0 = (0.33, 0.25, 0.25) \qquad \text{and} \qquad  \up P_0 = (0.38, 0.38, 0.42) 
	\end{equation}
	be the bounds for initial probability mass functions, whence the initial upper expectation functional is defined with 
	\begin{equation}\label{ex-1-lp-E}
	\up E_0(f) = \max_{\substack{\low P_0 \le P \le \up P_0\\\noalign{\smallskip} P(1_\states) = 1}} P(f).
	\end{equation}
	Both $\up T$ and $\up E_0$ are operators whose values are obtained as solutions of linear programming problems. As we explained in previous sections, we would hardly expect $\up E_n$, that is the upper expectation functional for $X_n$, to be easily expressed in terms of a single linear programming problem, but rather as a sequence of problems. Thus, to obtain the value of $\up E_n(f)$, for some $f\in\mathcal L(\states)$, we would first find $\up Tf$ as a solution of the linear program \eqref{ex-1-lp-T} and then use it in the next instance of the linear program with the objective function replaced with the expectation of $\up Tf$ to obtain $\up T^2f$, until finally we would maximize $\up E_0(\up T^nf)$ as a linear program of the form \eqref{ex-1-lp-E}. Practically this means that even though $\up E_0$ is natural extension of a simple probability interval, the linear programs for $\up E_n$ are in general much more complex. A similar situation occurs with an $n$-step transition operator $\up T^n$. 
	
	Nevertheless, we might still be interested in the lower bounds for probabilities of events of the form $(X_n = x)$. We can, for instance, find the upper probability $\up P(X_n = x)$ as $\up E_0\up T^n 1_{\{x\}}$, understanding of course that the imprecise probability model for the distribution of $X_n$ is no longer a simple probability interval. 
	
	The above leads us to the idea, that even if the imprecise transition operators $\up T^n$ cannot be expressed in a simple form comparable to matrices, we might still want to provide the lower and upper transition matrix containing the information of the conditional probabilities $\low P(X_{m+n} = y|X_m = x)$
	and $\up P(X_{m+n} = y|X_m = x)$, again bearing in mind that this is not an exhaustive information on the imprecise transition model. The upper probability can, for instance, be calculated by finding $\up T^n1_{\{y\}}(x)$. 
	
	In our case, the lower and upper probability mass vectors for $X_3$ are 
	\begin{equation}
	\low P_3 = (0.1966, 0.2672, 0.1513) \qquad \text{and} \qquad  \up P_3 = (0.5293, 0.5799, 0.3903),
	\end{equation}
	and the lower and upper 3-step transition probabilities are 
	\begin{align}
	\low M_3 = 
	\begin{bmatrix}
	0.2195 & 0.2500 & 0.1040 \\
	0.2195 & 0.2583 & 0.1533 \\
	0.1650 & 0.3067 & 0.2205 
	\end{bmatrix} \qquad \text{and} \qquad 
	\up M_3 = 
	\begin{bmatrix}
	0.5898 & 0.5992 & 0.3350 \\
	0.5383 & 0.5730 & 0.4175 \\
	0.4239 & 0.5609 & 0.4175 
	\end{bmatrix}
	\end{align}
	
\end{ex}

\section{Metric properties of imprecise operators}\label{s-mpio}
\subsection{Distances between upper operators}
Let $\allgambles_1 = \{ f\in\allgambles(\states) \colon 0 \le f(x) \le 1\, \forall x\in \states \}$. 
In \cite{2013:skulj-hable} the following distance between two upper expectation functionals $\up E$ and $\up E'$ is defined:  
\begin{equation}\label{eq-distance-functionals}
 d(\up E, \up E') = \max_{f\in \allgambles_1} |\up E(f) - \up E'(f)|. 
\end{equation}
When restricted to precise expectation functionals, the above distance coincides with the \emph{total variation distance} for probability measures:
\begin{equation}
d(P, Q) = \max_{A\in \mathcal F}|P(A)-Q(A)|,
\end{equation}
for two probability measures on an algebra $\mathcal F$.  
For real-valued maps on $\states$ we use the \emph{Chebyshev distance} 
\begin{equation}
d(f, g) = \max_{x\in\states} |f(x)-g(x)|,
\end{equation}
which we also extend to upper transition operators $\up T$ and $\up T'$:  
\begin{equation}
d(\up T, \up T') = \max_x d(\up T(\cdot |x), \up T'(\cdot |x)) = \max_{f\in \allgambles_1}d(\up Tf, \up T'f) .
\end{equation}
Subadditivity of the upper expectation functionals implies that 
\begin{equation}\label{eq-norm1-ue}
|\up E (f) - \up E(g)| \le |\up E (f-g)|\vee |\up E (g-f)| \le d(f, g) . 
\end{equation}
Hence, 
\begin{equation}
d(\up Tf, \up Tg) = \max_{x\in\states} |\up T(f|x) - \up T(g|x)| \le d(f, g)
\end{equation}
for every upper transition operator $\up T$.
Similarly, we have that
\begin{equation}\label{eq-dist-ET}
 d(\up E\, \up T, \up E\, \up T') = \max_{f\in\allgambles_1} |\up E( \up T f) - \up E( \up T' f) | \le \max_{f\in\allgambles_1} d( \up T f, \up T' f ) = d(\up T, \up T') . 
\end{equation}
It is easy to see that the distances defined using the corresponding lower expectation functionals and transition operators are the same as those where the upper expectations are used. 

In practical situations the upper expectations are usually the natural extensions of some coherent upper previsions defined on some subset $\mathcal K\subset\allgambles(\states)$. It would be therefore very useful if the differences between the values of two upper previsions on the elements of $\mathcal K$ would give some information on the distances between their natural extensions. In general, however, this does not seem to be possible. For an illustration we give the following simple example. 
\begin{ex}
	Let $\states$ be a set of 3 states, say $x, y, z$ and $\mathcal K = \{ f_1 = (0, 1, 0), f_2 = (0.1, 1, 0) \}.$ Then let two lower/upper previsions be given with the values:
	\begin{align*}
		\low P_1(f_1) & = 0.3 & \low P_2(f_1) &= 0.3 \\
		\up P_1(f_2) & = 0.305 & \up P_2(f_2) &= 0.306 
	\end{align*}
	Now let $\up E_1$ and $\up E_2$ be the corresponding natural extensions and $h = (1, 0.5, 0)$. Then we have that 
	\[ \up E_1(h) = 0.2 \qquad\text{and}\qquad \up E_2(h) = 0.21. \]
	Thus, although the maximal distance between $\up P_1$ and $\up P_2$ on $\mathcal K$ is only $0.001$, the distance $d(\up E_1, \up E_2)$ is at least $0.01$, which is 10 times larger. 
\end{ex}
Unlike the general case, in the case of 2-monotone lower probabilities the following holds (\cite{2013:skulj-hable}, Proposition~22):
\begin{prop}\label{prop-dist-2-alt}
	Let $\low P_1$ and $\low P_2$ be 2-monotone lower probabilities and $\low E_1$ and $\low E_2$ the corresponding lower expectation functionals. Then 
	\begin{equation}
		d(\low E_1, \low E_2) = \max_{A\subseteq \states} |\low P_1(A) - \low P_2(A)|. 
	\end{equation}
\end{prop}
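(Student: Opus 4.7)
The plan is to leverage the Choquet integral representation of lower expectations for 2-monotone capacities, stated earlier in the excerpt, and to reduce the maximum over $f\in\allgambles_1$ to a maximum over level sets.

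First I would dispose of the easy direction $d(\low E_1,\low E_2)\ge \max_A|\low P_1(A)-\low P_2(A)|$. Every indicator $1_A$ lies in $\allgambles_1$ and satisfies $\low E_i(1_A)=\low P_i(A)$, so the claim is immediate from the definition of $d$ (recalling that the distance defined via lower expectations coincides with the one defined via upper expectations, as noted after \eqref{eq-dist-ET}).

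For the non-trivial inequality I would take an arbitrary $f\in\allgambles_1$ and invoke the Choquet formula. Extending the integration range using $\low P_i(\states)=1$ and $\low P_i(\emptyset)=0$, one can rewrite
\begin{equation*}
\low E_i(f)=\int_0^1 \low P_i(\{f\ge t\})\,dt,
\end{equation*}
because the integrand equals $1$ for $t\le \min f$ and $0$ for $t>\max f$. Subtracting the two instances and pulling the absolute value inside the integral yields
\begin{equation*}
|\low E_1(f)-\low E_2(f)|\le \int_0^1 |\low P_1(\{f\ge t\})-\low P_2(\{f\ge t\})|\,dt \le \max_{A\subseteq\states}|\low P_1(A)-\low P_2(A)|,
\end{equation*}
since every level set $\{f\ge t\}$ is a subset of $\states$ and the outer integration runs over a set of length $1$. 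Taking the maximum over $f\in\allgambles_1$ then closes the argument.

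I do not anticipate a real obstacle here once the Choquet representation is in hand; the whole proof is essentially a layer-cake computation. The one subtle point worth flagging is that the Choquet identity for $\low E_i$ genuinely requires 2-monotonicity: the preceding example in the paper shows that without it the distance between natural extensions can strictly exceed any distance measured on a restricted class of gambles, so a level-set comparison of the kind above would fail. Thus the role of the 2-monotonicity hypothesis is precisely to license the passage from $f$ to its level sets.
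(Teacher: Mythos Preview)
Your argument is correct. Note that the paper does not actually prove this proposition; it merely quotes it from \cite{2013:skulj-hable} (their Proposition~22), so there is no in-paper argument to compare against. Your Choquet-integral derivation is the natural route and relies only on the formula for $\low E(f)$ in terms of $\low P$ already displayed in the paper, so it stands on its own as a complete and self-contained proof.
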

The fact that $\low P_1(A)-\low P_2(A) = (1-\up P_1(A^c)) - (1-\up P_2(A^c)) = \up P_2(A^c)-\up P_1(A^c)$ and $d(\up E_1, \up E_2) = d(\low E_1, \low E_2)$ implies that the choice of either upper or lower functionals does not make any difference.  

\subsection{Distances between upper and lower operators}\label{ss-dulo}
Let $\cset_1$ and $\cset_2$ be two credal sets with the corresponding lower and upper expectation functionals denoted by $\up E_1, \low E_1$ and $\up E_2, \low E_2$ respectively. It has been shown in \cite{2013:skulj-hable} that the maximal distance between the elements of two credal sets can be expressed in terms of the distance between the corresponding expectation functionals:
\begin{align}
	\up d(\cset_1, \cset_2) & := \max_{E_1\in \cset_1, E_2\in \cset_2} d(E_1, E_2) \\
	& = \max_{f\in \allgambles_1}\max \{ \up E_1(f)-\low E_2(f), \up E_2(f) - \low E_1(f) \} 
	\intertext{Now, since $1-f\in \allgambles_1$ iff $f\in \allgambles_1$ and using $\up E_i (1-f) = 1-\low E_i(f)$, the above simplifies into:}
	& = \max_{f\in \allgambles_1} \up E_1(f)-\low E_2(f).
\end{align}
It follows that
\begin{equation}
 \up d(\cset, \cset) = \max_{f\in \allgambles_1} \up E(f)-\low E(f), 
\end{equation}
which could be regarded as a measure of imprecision  of a credal set.  
The above equalities justify the following definition of a distance between upper and lower expectation functionals. 
\begin{equation}
 d(\up E_1, \low E_2) = \max_{f\in \allgambles_1} \up E_1(f)-\low E_2(f), 
\end{equation}
and
\begin{equation}
 d(\up T, \low T) = \max_{x\in\states}d(\up T(\cdot | x), \low T(\cdot|x)). 
\end{equation}
The following proposition holds:
\begin{prop}
	Let $\up E_1$ and $\low E_2$ be a lower and an upper expectation functionals. Then 
	\begin{equation}
		 \max_{f\in \allgambles_1}\up E_1(f)-\low E_2(f) = \max_{A\subseteq \states} \up E_1(1_A) - \low E_2(1_A),  
	\end{equation}		
	where $1_A$ denotes the indicator function of set $A$. This implies that 
	\begin{equation}
		 d(\up E_1, \low E_2) = \max_{A\subseteq \states}\up E_1(1_A)-\low E_2(1_A). 
	\end{equation}			
\end{prop}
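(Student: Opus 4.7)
The plan is to exploit the convexity of the functional $\Phi(f) := \up E_1(f) - \low E_2(f)$ on the unit cube $\allgambles_1 = [0,1]^{\states}$. One direction of the equality, $\max_{f \in \allgambles_1}\Phi(f) \ge \max_{A \subseteq \states}\Phi(1_A)$, is immediate since each indicator $1_A$ lies in $\allgambles_1$. The substantive direction is the reverse inequality, and the strategy is to show that the maximum of $\Phi$ on the hypercube is attained at a vertex.

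First I would verify that $\Phi$ is convex on $\allgambles(\states)$. Using the subadditivity (ii) and non-negative homogeneity (iii) of $\up E_1$, for any $\lambda \in [0,1]$ and $f,g \in \allgambles(\states)$,
$$\up E_1(\lambda f + (1-\lambda) g) \le \up E_1(\lambda f) + \up E_1((1-\lambda) g) = \lambda \up E_1(f) + (1-\lambda)\up E_1(g),$$
so $\up E_1$ is convex. Dually, by the superadditivity (ii') and non-negative homogeneity of $\low E_2$, $\low E_2$ is concave, so $-\low E_2$ is convex, and hence $\Phi = \up E_1 - \low E_2$ is a convex function on $\allgambles(\states)$.

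Next I would observe that $\allgambles_1$ is the hypercube $[0,1]^{\states}$ inside the finite-dimensional space $\allgambles(\states)$, whose extreme points are precisely the $\{0,1\}$-valued maps, i.e., the indicator functions $\{1_A : A \subseteq \states\}$. A standard fact from convex analysis says that a convex function on a compact convex polytope attains its maximum at an extreme point; concretely, writing any $f \in \allgambles_1$ as a convex combination $f = \sum_{A \subseteq \states} \lambda_A 1_A$ with $\lambda_A \ge 0$ and $\sum_A \lambda_A = 1$, convexity of $\Phi$ yields
$$\Phi(f) \le \sum_{A \subseteq \states} \lambda_A \Phi(1_A) \le \max_{A \subseteq \states}\Phi(1_A).$$
This establishes the reverse inequality and hence the first displayed equality of the proposition; the second equality is then just the definition of $d(\up E_1, \low E_2)$ recorded in Section~\ref{ss-dulo}.

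There is no real obstacle in this argument. The only point worth highlighting is that convexity of $\Phi$ requires both the subadditivity/positive homogeneity of $\up E_1$ \emph{and} the dual superadditivity/positive homogeneity of $\low E_2$, but not coherence, 2-monotonicity, or any further structure. This contrasts sharply with Proposition~\ref{prop-dist-2-alt}, whose proof depended on a Choquet integral representation only available in the 2-monotone case; the present claim is a general structural fact about arbitrary pairs of upper and lower expectation functionals.
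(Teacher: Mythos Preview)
Your argument is correct and takes a genuinely different route from the paper's. The paper passes to the credal set representation, writing $\up E_1(f)-\low E_2(f)$ as $\max_{E_1\in\cset_1,\,E_2\in\cset_2}\bigl[E_1(f)-E_2(f)\bigr]$, and then for each fixed pair of \emph{precise} functionals with mass functions $p_1,p_2$ exhibits the explicit maximizing set $A=\{x:p_1(x)\ge p_2(x)\}$ via a direct sign argument on $\sum_x(p_1(x)-p_2(x))f(x)$. You instead work entirely with the upper and lower functionals themselves: the key observation that $\Phi=\up E_1-\low E_2$ is convex (from subadditivity and superadditivity together with non-negative homogeneity) lets you invoke the extreme-point principle on the hypercube $[0,1]^{\states}$ directly. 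Your approach is shorter and uses only the axiomatic properties (i)--(v) and (ii)$'$ without appealing to the existence of dominating credal sets; the paper's approach, on the other hand, is constructive in that it identifies the optimal $A$ once an extremal pair $(E_1,E_2)$ has been fixed.
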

\begin{proof}
	Let $\cset_1$ and $\cset_2$ be the credal sets corresponding to $\up E_1$ and $\low E_2$. We have that 
	\begin{align}
		d(\up E_1, \low E_2) & = \max_{E_1\in \cset_1, E_2\in \cset_2} d(E_1, E_2) \\
		& = \max_{E_1\in \cset_1, E_2\in \cset_2} \max_{f\in\allgambles_1}E_1(f) - E_2(f). 
	\end{align} 
	For every (precise) expectation functional $E_i$ there exists some probability mass function $p_i$ so that $E_i(f) = \sum_{x\in\states} p_i(x)f(x)$. 
	Now let $A=\{ x\colon p_1(x) \ge p_2(x) \}$ and let $F=1_A$. For every $f\in\allgambles_1$ we have that 
	\begin{align}
		E_1(f) - E_2(f) & = \sum_{x\in\states} (p_1(x)-p_2(x))f(x) \\
		& \le \sum_{x\in\states} (p_1(x)-p_2(x))F(x) \\
		& = E_1(F) - E_2(F).
	\end{align} 
	Thus we have that the difference $E_1(f)-E_2(f)$ is always maximized by an indicator function. Hence, 
	\begin{align}
	d(\up E_1, \low E_2) & = \max_{E_1\in \cset_1, E_2\in \cset_2} \max_{A\subseteq\states}E_1(1_A) - E_2(1_A) \\
	 & = \max_{A\subseteq\states}\{ \max_{E_1\in \cset_1} E_1(1_A) - \min_{E_2\in \cset_2} E_2(1_A)\} \\
	 & = \max_{A\subseteq\states} \up E_1(1_A) - \low E_2(1_A).
	 \end{align} 
\end{proof}

\subsection{Coefficients of ergodicity}
\emph{Coefficients of ergodicity} measure the rate of convergence of Markov chains. Given a metric $d$ on the set of probability distributions, a coefficient of ergodicity is a real-valued map $\tau\colon T\mapsto \tau(T)$ with the property that 
\[ d(pT, qT) \le \tau(T)d(p, q), \]
where $p$ and $q$ are arbitrary probability mass functions. In the general form coefficients of ergodicity were defined by \citet{seneta1979coefficients}, while in the form where the total variation distance is used, it was introduced by \citet{dobrushin:56}. Given a stochastic matrix $P$, the value of $\tau(P)$ assuming the total variation distance, equals maximal distance between the rows of $P$:
\begin{equation}
	\tau (P) = \max_{i, j}d(P_i, P_j), 
\end{equation}
where $P_i$ and $P_j$ are the $i$-th and $j$-th rows of $P$ respectively. In the operators notation we would write
\begin{equation}
	\tau(T) = \max_{x, y}d(T(\cdot|x), T(\cdot|y)).
\end{equation}
The general definition clearly implies that: 
\begin{equation}
 d(pT^n, qT^n) \le \tau(T)^n d(p, q). 
\end{equation}
Coefficients of ergodicity are also called \emph{contraction coefficients}. Since transition operators are always non expanding, which means that $\tau(T)\le 1$, the case of interest is usually when $\tau(T)$ is strictly less than 1. In such case $\tau(T)^n$ tends to 0 as $n$ approaches infinity, which means that the distance $d(pT^n, qT^n)$ approaches 0. This means that the distance between probability distribution of random variables $X_n$ of the corresponding Markov chain is diminishing, or equivalently, that the distributions converge to a unique limit distribution. 

Often, despite $\tau(T)=1$, the value of $\tau(T^r)$ might be strictly less than 1, which is sufficient to guarantee unique convergence. In fact, a chain is uniquely convergent exactly if $\tau(T^r)<1$ for some positive integer $r$. 

Coefficients of ergodicity have been generalized for the case of imprecise Markov chains too. The first, so called \emph{uniform coefficient of ergodicity} has been introduced by Hartfiel~\cite{hart:98} as 
\begin{equation}
	\tau(\mathcal T) = \max_{T\in\mathcal T}\tau(T), 
\end{equation}
where $\tau$ is the coefficient of ergodicity based on the total variation distance. If $\tau(\mathcal T)<1$ this implies unique convergence of the corresponding Markov chains in the sense that every subset of the chains uniquely converges. This implies that the upper and lower expectations converge too, but in order to ensure unique convergence in the sense of expectation bounds weaker condition suffices. 

The \emph{weak coefficient of ergodicity} for imprecise Markov chains was defined in \cite{2013:skulj-hable} as
\begin{equation}
 \rho(\up T) = \max_{x, y\in\states} d(\up T(\cdot|x), \up T(\cdot| y)). 
\end{equation}
That is, it is equal to the maximal distance between its row  upper expectation functionals. The following properties hold:
\begin{enumerate}[{(i)}]
	\item $\rho(\up T\, \up S)\le \rho(\up T)\rho(\up S)$ for arbitrary upper transition operators $\up T$ and $\up S$; 
	\item $d(\up E_1 \, \up T, \up E_2 \, \up T) \le d(\up E_1, \up E_2 )\rho (\up T)$ for arbitrary upper expectation functionals $\up E_1, \up E_2$ and transition operator $\up T$. 
\end{enumerate}
The following theorem holds. 
\begin{thm}[\cite{2013:skulj-hable}~Theorem~21]
	Let $\up T$ be an imprecise transition operator corresponding to a Markov chain $\{X_n \}_{n\in \NN}$. Then the chain converges uniquely if and only if $\rho(\up T^r)<1$ for some integer $r>0$. 
\end{thm}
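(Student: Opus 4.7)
My plan is to split the equivalence into the two obvious implications and exploit the two listed properties of $\rho$, namely submultiplicativity $\rho(\up T\,\up S)\le\rho(\up T)\rho(\up S)$ and the contraction estimate $d(\up E_1\up T,\up E_2\up T)\le d(\up E_1,\up E_2)\rho(\up T)$.

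For the ``if'' direction, I assume $\rho(\up T^r)<1$ for some $r$ and aim to show both (a) the sequence $\up E_0\up T^n$ is Cauchy for every initial $\up E_0$, and (b) any two initial functionals lead to the same limit. Write $n=kr+s$ with $0\le s<r$. Iterating property (i) gives $\rho(\up T^{kr})\le\rho(\up T^r)^k$, and then property (ii) applied with operator $\up T^{kr}$ yields
\begin{equation}
d(\up E_1\up T^n,\up E_2\up T^n)=d\bigl((\up E_1\up T^s)\up T^{kr},(\up E_2\up T^s)\up T^{kr}\bigr)\le d(\up E_1\up T^s,\up E_2\up T^s)\,\rho(\up T^r)^k.
\end{equation}
Since $\allgambles_1\subset[0,1]^{\states}$, the prefactor is bounded by $1$, so the right-hand side tends to $0$ as $n\to\infty$, uniformly in $s$. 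Choosing $\up E_1=\up E_0$ and $\up E_2=\up E_0\up T^m$ and noting that $\up E_0\up T^{n+m}=(\up E_0\up T^m)\up T^n$, the same bound shows the sequence is Cauchy; completeness of the space of upper expectation functionals under $d$ then supplies the limit, and taking $\up E_1,\up E_2$ arbitrary shows the limit is the same for all starting distributions.

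For the ``only if'' direction, the key observation is that $\rho(\up T^n)$ is realised by evaluating at degenerate initial functionals. For each $x\in\states$, let $E_x$ denote the point-mass functional $E_x(f)=f(x)$; then $E_x\up T^n(f)=\up T^n f(x)=\up T^n(f|x)$, so
\begin{equation}
\rho(\up T^n)=\max_{x,y\in\states} d(\up T^n(\cdot|x),\up T^n(\cdot|y))=\max_{x,y\in\states} d(E_x\up T^n,E_y\up T^n).
\end{equation}
Unique convergence forces $E_x\up T^n$ and $E_y\up T^n$ to tend to a common limit, hence $d(E_x\up T^n,E_y\up T^n)\to 0$ for every pair $(x,y)$. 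Because $\states$ is finite the maximum is over finitely many pairs, so $\rho(\up T^n)\to 0$; in particular $\rho(\up T^r)<1$ for some $r$.

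The main conceptual obstacle I anticipate is the ``if'' direction, specifically guaranteeing existence of the limit rather than merely contraction between any two orbits: the contraction gives a Cauchy bound but one must invoke completeness of the metric space of upper expectation functionals under $d$ (inherited from uniform convergence on the compact set $\allgambles_1$), and then verify that the pointwise limit inherits the defining properties (boundedness, subadditivity, homogeneity, constant additivity, monotonicity) that make it again an upper expectation functional. The remaining computations, including the identification of $\rho(\up T^n)$ with row-to-row distances of the iterated operator, are straightforward unpacking of definitions and the two listed properties of $\rho$.
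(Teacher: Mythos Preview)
The paper does not supply its own proof of this theorem; it is quoted without proof from the cited reference (\v{S}kulj--Hable, Theorem~21). Your argument is a correct and natural reconstruction: the contraction estimate (ii) together with submultiplicativity (i) of $\rho$ yields geometric decay of $d(\up E_1\up T^n,\up E_2\up T^n)$, from which both the Cauchy property of each orbit and independence of the limit from the initial functional follow; conversely, identifying $\rho(\up T^n)$ with the maximal pairwise distance among the orbits $E_x\up T^n$ of Dirac functionals forces $\rho(\up T^n)\to 0$ under unique convergence, since $\states$ is finite. The only step requiring genuine care, as you yourself flag, is closedness of the class of upper expectation functionals under $d$-limits; on a finite $\states$ this is routine, since properties (i)--(v) are preserved under pointwise limits, and $d$-convergence on $\allgambles_1$ extends to pointwise convergence on all of $\allgambles(\states)$ via non-negative homogeneity and constant additivity.
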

If $\up T$ is an upper transition operator such that $\up T(\cdot |x)$ is the natural extension of some 2-alternating upper probability, then it follows from Proposition~\ref{prop-dist-2-alt} that 
\begin{equation}
	\rho(\up T) = \max_{A\subseteq \states}\max_{x, y\in\states} |\up T(A|x)-\up T(A|y)|.
\end{equation}
Ergodicity coefficient can be applied to a pair of upper and lower expectation functionals as follows. 
\begin{prop}\label{pr-ergodicity-lu}
	Let $\up E_1$ and $\low E_2$ be an upper and lower expectation functionals, and $\up T$ an upper transition operator. Then:
	\begin{equation}
		 d(\up E_1 \up T, \low E_2 \up T) \le d(\up E_1 , \low E_2 )\rho(\up T).  
	\end{equation}
\end{prop}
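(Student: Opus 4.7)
The plan is to reduce the inequality, by the definition $d(\up E_1\up T,\low E_2\up T)=\max_{f\in \allgambles_1}[\up E_1(\up T f)-\low E_2(\up T f)]$, to a pointwise statement about the single function $g:=\up T f$ for an arbitrary fixed $f\in \allgambles_1$. The key observation will be that the image $g(\states)$ is squeezed into an interval of length at most $\rho(\up T)$: for every pair of states $x,y$,
\begin{equation*}
|g(x)-g(y)|=|\up T(f|x)-\up T(f|y)|\le d(\up T(\cdot|x),\up T(\cdot|y))\le \rho(\up T),
\end{equation*}
where the first inequality is an instance of \eqref{eq-norm1-ue} (applied to the upper expectations $\up T(\cdot|x),\up T(\cdot|y)$ with $f\in\allgambles_1$) and the second is the definition of $\rho$. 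Hence $\max g-\min g\le \rho(\up T)$.

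Next I would exploit the algebraic properties of $\up E_1$ and $\low E_2$, both of which are constant-additive and non-negatively homogeneous. Setting $m=\min g$ and (in the nontrivial case $\rho(\up T)>0$) $h=(g-m)/\rho(\up T)$, one has $h\in\allgambles_1$, and using constant additivity and non-negative homogeneity on both functionals,
\begin{equation*}
\up E_1(g)-\low E_2(g)=\up E_1(g-m)-\low E_2(g-m)=\rho(\up T)\bigl[\up E_1(h)-\low E_2(h)\bigr].
\end{equation*}
Since $h\in \allgambles_1$, the bracketed quantity is at most $d(\up E_1,\low E_2)$ by the definition of the distance given in Section~\ref{ss-dulo}. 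Combining these yields
\begin{equation*}
\up E_1(\up T f)-\low E_2(\up T f)\le d(\up E_1,\low E_2)\,\rho(\up T),
\end{equation*}
and taking the maximum over $f\in \allgambles_1$ gives the claim. The degenerate case $\rho(\up T)=0$ forces $g$ to be a constant, whence both sides vanish by constant additivity and boundedness.

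I do not anticipate a genuine obstacle: the argument is essentially the same contraction estimate used to prove property (ii) of $\rho$ listed just before the proposition, with the sole twist that the two functionals on which we evaluate $\up T f$ are of opposite types (one upper, one lower). The only point that deserves care is to confirm that constant additivity and non-negative homogeneity are available for $\low E_2$ as well as for $\up E_1$; this is exactly what is stated at the end of Section~2.1, where the lower expectation functional is noted to satisfy properties (i), (iii), (iv), (v), differing from the upper only in replacing subadditivity by superadditivity, neither of which is invoked here.
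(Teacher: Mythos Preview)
Your argument is correct, but it proceeds along a genuinely different route from the paper. The paper passes through the credal sets $\cset_1,\cset_2$ corresponding to $\up E_1,\low E_2$: it writes $\up E_1(\up Tf)-\low E_2(\up Tf)=\max_{E_1\in\cset_1,E_2\in\cset_2}[E_1(\up Tf)-E_2(\up Tf)]$, swaps the two maxima, applies the already established contraction property~(ii) of $\rho$ to each pair of \emph{precise} expectations, and finally invokes $\up d(\cset_1,\cset_2)=d(\up E_1,\low E_2)$ from Section~\ref{ss-dulo}. Your proof instead stays entirely at the level of the functionals and uses only their axioms: you observe that $g=\up Tf$ has oscillation at most $\rho(\up T)$, then rescale via constant additivity and non-negative homogeneity (valid for both $\up E_1$ and $\low E_2$) to land back in $\allgambles_1$. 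The paper's approach is shorter once property~(ii) and the credal-set identity are in hand; yours is more self-contained and avoids any appeal to the linear representation, which could be an advantage in settings where credal sets are less convenient.

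One small correction: the inequality $|\up T(f|x)-\up T(f|y)|\le d(\up T(\cdot|x),\up T(\cdot|y))$ is not an instance of \eqref{eq-norm1-ue} (that equation bounds the difference of one functional at two arguments, whereas here you compare two functionals at one argument). It follows directly from the definition \eqref{eq-distance-functionals} since $f\in\allgambles_1$. This does not affect the validity of the proof.
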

\begin{proof}
	Denote by $\cset_1$ and $\cset_2$ the credal sets corresponding to $\up E_1$ and $\low E_2$ respectively. Then we have that:
	\begin{align}
	d(\up E_1 \up T, \low E_2 \up T) & = \max_{f\in \allgambles_1} \up E(\up Tf)-\low E(\up Tf) \\
	& = \max_{f\in \allgambles_1} \max_{E_1\in\cset_1, E_2\in \cset_2}E_1(\up Tf)-E_2(\up Tf) \\
	& = \max_{E_1\in\cset_1, E_2\in \cset_2}\max_{f\in \allgambles_1} E_1(\up Tf)-E_2(\up Tf) \\
	& \le  \max_{E_1\in\cset_1, E_2\in \cset_2}d(E_1, E_2)\rho(\up T) \\
	&=d(\up E_1 , \low E_2 )\rho(\up T).		
	\end{align}
\end{proof}

%\subsection{Practical calculations of the distances between imprecise transition operators}
%(* mogoče odstranimo *)
%We have seen in the previous section that the 2-alternating property simplifies the calculations between upper expectation functionals or upper transition operators, but unfortunately, the special classes of imprecise models are not stable in general. That means that even if both $\up E_0$ and every $\up T(\cdot |x)$ are 2-alternating, for instance, the 1-step upper expectation functional $\up E_1 = \up E_0 \up T$ does not need to have this property. Neither can necessarily an $n$-step upper expectation functional be represented in the form of upper and lower probability or probability intervals if the transition operator and initial functional can be.  
%
%In general thus the distance between upper expectation functionals can be calculated as a problem of the form \ref{eq-distance-functionals}. The problem of calculations of the distances between imprecise probability distributions corresponding to two distinct Markov chains, where the corresponding upper expectations are obtained as sequential solutions to multiple linear programming problems, is therefore in general complex. 
%%The methods developed in the following sections will provide valuable on these distances in terms of the distances between given initial distributions and transition operators. 

\section{Perturbations of imprecise Markov chains}\label{s-pio}
\subsection{Distances between imprecise distributions of perturbed Markov chains}\label{ss-dbid}
Suppose we have two imprecise Markov chains given by initial expectation functionals $\up E_0, \up E_0'$ and upper transition operators $\up T, \up T'$. The $n$-th step upper expectation functionals are then $\up E_n = \up E_0\,\up T^n$ and $\up E'_n =\up E_0'\,\up T'^n$ respectively. Our goal is to find the bounds on the distances between $\up E_n$ and $\up E'_n$ if the distances $d(\up E_0, \up E_0')$ and $d(\up T, \up T')$ are known. We will also assume that both chains are uniquely convergent with weak coefficients of ergodicity $\rho_n = \rho(\up T^n)$ and $\rho'_n = \rho(\up T'^n)$, so that $\lim_{n\to \infty}\rho_n  = 0$ and $\lim_{n\to \infty}\rho'_n  = 0$. The latter conditions are clearly necessary and sufficient for unique convergence. Moreover, we will give bounds on the distance between the limit distributions $\up E_\infty$ and $\up E'_\infty$. To do so we will follow the similar derivation of the bounds for the case of precise (but not necessarily finite state) Markov chains by \citet{Mitrophanov2005}. 

We will make use of the following proposition:
\begin{prop}\label{pr-diff-sum}
	The following equality holds for a pair of imprecise Markov chains and every $n\in \NNo$:
	\begin{equation}
		\up E_0 \up T^n - \up E'_0 \up T'^n = (\up E_0 \up T^n - \up E'_0 \up T^n) + \sum_{i=0}^{n-1} (\up E'_0 \up T'^i \up T\, \up T^{n-i-1} - \up E'_0 \up T'^i \up T' \up T'^{n-i-1}),
	\end{equation}
	and therefore, 
	\begin{equation}
		d(\up E_0 \up T^n, \up E'_0 \up T'^n) \le d(\up E_0 \up T^n, \up E'_0 \up T^n) + \sum_{i=0}^{n-1} d(\up E'_0 \up T'^i \up T\, \up T^{n-i-1}, \up E'_0 \up T'^i \up T' \up T'^{n-i-1}).
	\end{equation}
\end{prop}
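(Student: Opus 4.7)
The plan is to prove the identity as a pointwise equality of real numbers (after evaluating both sides on an arbitrary $f\in\allgambles(\states)$) and then to deduce the distance bound from the triangle inequality together with the monotonicity of the supremum. Although $\up T$ and $\up T'$ are nonlinear, composition of upper operators is associative and each evaluated functional is real-valued, so the expression $\up E_0\,\up T^n - \up E'_0\,\up T'^n$ is a well-defined real-valued map, even though it is not itself an upper expectation functional. This observation is what allows the whole argument to reduce to ordinary arithmetic of reals.

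For the equality, I first add and subtract $\up E'_0\,\up T^n$, which immediately splits off the first bracket on the right-hand side and leaves a residual $\up E'_0\,\up T^n - \up E'_0\,\up T'^n$. This residual is handled by a telescoping argument along the sequence $a_i := \up E'_0\,\up T'^{i}\,\up T^{n-i}$ for $i=0,1,\ldots,n$, which runs from $a_0 = \up E'_0\,\up T^n$ to $a_n = \up E'_0\,\up T'^n$. Evaluated at any $f$, this gives $a_0(f) - a_n(f) = \sum_{i=0}^{n-1}\bigl(a_i(f)-a_{i+1}(f)\bigr)$, and each consecutive difference matches the $i$-th summand appearing in the claim once $a_i$ is factored as $\up E'_0\,\up T'^{i}\,\up T\,\up T^{n-i-1}$ and $a_{i+1}$ as $\up E'_0\,\up T'^{i}\,\up T'\,\up T^{n-i-1}$.

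The distance bound then follows routinely. Taking absolute values of the identity at each $f\in\allgambles_1$ and applying the triangle inequality yields a pointwise bound by the sum of $n+1$ absolute values; taking $\max_{f\in\allgambles_1}$ and using that the maximum of a sum is at most the sum of the maxima collapses each term into the corresponding distance $d(\cdot,\cdot)$ as defined in Sc.~\ref{s-mpio}. No structural property of the upper operators beyond the well-definedness of composition is required at this step. The main subtlety, rather than a genuine obstacle, is to keep in mind that a difference of upper expectation functionals need not itself be an upper expectation functional, so the identity must be read pointwise in $\RR$; once that is clear, both the telescoping and the triangle inequality proceed exactly as in the precise case treated in \cite{Mitrophanov2005}.
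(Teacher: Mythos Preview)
Your argument is correct and is the standard telescoping proof; the paper states this proposition without proof, so there is no alternative approach to compare against. One point worth flagging: your telescope with $a_i=\up E'_0\,\up T'^{i}\,\up T^{\,n-i}$ gives $a_{i+1}=\up E'_0\,\up T'^{i}\,\up T'\,\up T^{\,n-i-1}$, whereas the displayed statement has $\up E'_0\,\up T'^{i}\,\up T'\,\up T'^{\,n-i-1}$ in the subtracted term---that version does not telescope (each subtracted term would collapse to $\up E'_0\,\up T'^{n}$). Your form is the intended one, and it is the one actually used in the proof of Theorem~\ref{thm-diff-err} and in the analogous Proposition~\ref{pr-sum-lu}; so you have in effect proved the corrected identity.
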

\begin{thm}\label{thm-diff-err}
Denote $E_n = d(\up E_n, \up E'_n)$ and $D = d(\up T, \up T')$. The following inequality holds:	
\begin{equation}
	 E_n \le E_0 \rho_n + D\sum_{i=0}^{n-1}\rho_{i}. 
\end{equation}
\end{thm}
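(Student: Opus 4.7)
The plan is to apply the triangle-inequality consequence of Proposition~\ref{pr-diff-sum} and bound the resulting summands separately. This yields
\[
E_n \le d(\up E_0 \up T^n, \up E'_0 \up T^n) + \sum_{i=0}^{n-1} d\bigl(\up E'_0 \up T'^i \up T\, \up T^{n-i-1},\; \up E'_0 \up T'^i \up T' \up T^{n-i-1}\bigr).
\]
The ``initial-distribution'' summand is handled immediately by property~(ii) of the weak coefficient of ergodicity: both arguments share the common tail operator $\up T^n$, which gives $d(\up E_0 \up T^n, \up E'_0 \up T^n) \le d(\up E_0, \up E'_0)\,\rho(\up T^n) = E_0 \rho_n$.

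For each term in the sum I would introduce the auxiliary functionals $\up F_i := \up E'_0 \up T'^i \up T$ and $\up F'_i := \up E'_0 \up T'^i \up T'$, and argue that each is itself an upper expectation functional — composing an upper expectation functional with an upper transition operator preserves the defining axioms (i)--(v), so the intermediate objects stay within the class to which property~(ii) applies. The $i$-th summand then reads $d(\up F_i \up T^{n-i-1}, \up F'_i \up T^{n-i-1})$, and a second application of property~(ii) with common tail $\up T^{n-i-1}$ extracts the contraction factor $\rho(\up T^{n-i-1}) = \rho_{n-i-1}$ and reduces the problem to bounding $d(\up F_i, \up F'_i) = d(\up E'_0 \up T'^i \up T, \up E'_0 \up T'^i \up T')$. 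By inequality~\eqref{eq-dist-ET}, applied to the upper expectation functional $\up E'_0 \up T'^i$, this residual distance is at most $d(\up T, \up T') = D$. Each summand is therefore bounded by $D\rho_{n-i-1}$.

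Summing over $i = 0, \ldots, n-1$ and reindexing by $j = n-i-1$ converts the telescopic contribution into $D\sum_{j=0}^{n-1}\rho_j$, which combined with $E_0\rho_n$ gives the inequality in the statement. The conceptually essential step is the double invocation of property~(ii): once to peel off the outer contraction $\rho_{n-i-1}$ generated by the tail $\up T^{n-i-1}$, and once (through~\eqref{eq-dist-ET}) to translate the local discrepancy between $\up T$ and $\up T'$ into the global operator distance $D$. The only technical point is verifying that $\up E'_0 \up T'^i \up T$ is genuinely an upper expectation functional so that property~(ii) applies to it; this follows from a routine check of (i)--(v).
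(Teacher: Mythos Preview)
Your proof is correct and follows essentially the same route as the paper's: both start from the decomposition in Proposition~\ref{pr-diff-sum}, bound the initial-distribution term by $E_0\rho_n$ via property~(ii), and handle each summand by first extracting the contraction factor $\rho_{n-i-1}$ and then invoking~\eqref{eq-dist-ET} to bound the residual by $D$. The only difference is cosmetic notation (your $\up F_i, \up F'_i$ are the paper's $\up E^*_{i+1}, \up E'_{i+1}$) and your explicit remark that the composites remain upper expectation functionals, which the paper leaves implicit.
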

\begin{proof}
	Proposition~\ref{pr-diff-sum} implies that 
	\begin{align} 
		E_n & \le d(\up E_0 \up T^n, \up E'_0 \up T^n) + \sum_{i=0}^{n-1}d(\up E'_0 \up T'^i \up T\, \up T^{n-i-1}, \up E'_0 \up T'^i \up T' \up T^{n-i-1}) 
		\intertext{denote $\up E'_{i+1} = \up E'_0\up T'^{i+1}$ and $\up E^*_{i+1} = \up E'_0\up T'^{i}\up T$ }
		& \le E_0\rho_n +  \sum_{i=0}^{n-1}d(\up E^*_{i+1}  \up T^{n-i-1}, \up E'_{i+1} \up T^{n-i-1}) \\
		& \le E_0\rho_n +  \sum_{i=0}^{n-1}d(\up E^*_{i+1} , \up E'_{i+1})\rho_{n-i-1} \\
		& = E_0\rho_n +  \sum_{i=0}^{n-1}d(\up E'_{i}\up T , \up E'_{i}\up T')\rho_{n-i-1} \\
		\intertext{by \eqref{eq-dist-ET}}
		& \le E_0\rho_n +  \sum_{i=0}^{n-1}d(\up T , \up T')\rho_{n-i-1} \\
		& = E_0\rho_n +  D\sum_{i=0}^{n-1}\rho_{n-i-1} \\
		& = E_0\rho_n +  D\sum_{i=0}^{n-1}\rho_{i} .		
	\end{align}
\end{proof}

\begin{cor}\label{cor-op-err}
Denote $D_n  = d(\up T^n, \up T'^n )$ (that is $D_1 = D$). The following inequality then holds:
\begin{equation}
	 D_n \le D_1\sum_{i=0}^{n-1} \rho_i.
\end{equation}
\end{cor}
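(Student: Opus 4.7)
The plan is to reduce the statement to Theorem~\ref{thm-diff-err} by considering two chains that share the same (point-mass) initial distribution, so that only the transition operators differ.

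For each state $x\in\states$, let $\delta_x$ denote the precise expectation functional $f\mapsto f(x)$. This is trivially a (coherent) upper expectation functional, its credal set being the singleton $\{\delta_x\}$. From the definition of the action of an imprecise transition operator on a gamble one gets $(\delta_x\,\up T^n)(f) = \delta_x(\up T^n f) = (\up T^n f)(x) = \up T^n(f\mid x)$, and analogously for $\up T'$. Hence $\delta_x\,\up T^n = \up T^n(\cdot\mid x)$ and $\delta_x\,\up T'^n = \up T'^n(\cdot\mid x)$, so by the definition of the distance on upper transition operators given earlier in the paper,
\begin{equation*}
D_n \;=\; d(\up T^n,\up T'^n) \;=\; \max_{x\in\states} d\bigl(\delta_x\,\up T^n,\;\delta_x\,\up T'^n\bigr).
\end{equation*}

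Next, I would apply Theorem~\ref{thm-diff-err} to the pair of chains with common initial functional $\up E_0 = \up E'_0 = \delta_x$ and transition operators $\up T$ and $\up T'$. Then $E_0 = d(\delta_x,\delta_x) = 0$, so the inequality of that theorem collapses to
\begin{equation*}
d\bigl(\delta_x\,\up T^n,\;\delta_x\,\up T'^n\bigr) \;\le\; 0\cdot\rho_n + D\sum_{i=0}^{n-1}\rho_i \;=\; D_1\sum_{i=0}^{n-1}\rho_i.
\end{equation*}
Since this bound is uniform in $x$, taking the maximum over $x\in\states$ on the left-hand side yields $D_n \le D_1\sum_{i=0}^{n-1}\rho_i$, as claimed.

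The proof requires no machinery beyond Theorem~\ref{thm-diff-err}; the only conceptual step is recognizing that $D_n$ is a special instance of the expectation-functional distance $E_n$, obtained by taking both chains to start from the same Dirac initial distribution. I do not anticipate any genuine obstacle — just a small bookkeeping check to match the operator-on-operators notation ($d(\up T^n,\up T'^n)$) with the operator-on-functionals notation ($d(\up E_0\up T^n,\up E'_0\up T'^n)$) via the Dirac trick.
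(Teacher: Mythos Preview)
Your argument is correct and uses the same core idea as the paper --- reduce the operator distance to a maximum over row-wise expectation-functional distances and then invoke Theorem~\ref{thm-diff-err}. The only difference is in how the reduction is set up. You take both chains with the \emph{same} Dirac initial functional $\delta_x$, so that $E_0=0$ and Theorem~\ref{thm-diff-err} applied over $n$ steps gives the bound directly. The paper instead peels off the first step, writing $\up T^n(\cdot\mid x)=\up T(\cdot\mid x)\,\up T^{n-1}$ and $\up T'^n(\cdot\mid x)=\up T'(\cdot\mid x)\,\up T'^{n-1}$, and then applies Theorem~\ref{thm-diff-err} over $n-1$ steps with the \emph{different} initial functionals $\up T(\cdot\mid x)$ and $\up T'(\cdot\mid x)$; this produces an extra term $D_1\rho_{n-1}$ from $E_0$, which is absorbed into the sum. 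Your version is the cleaner of the two, since it avoids the off-by-one bookkeeping entirely.
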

\begin{proof}
	We have:
	\begin{align}
		d(\up T^n, \up T'^n) & = \max_{f\in\allgambles_1}d(\up T^nf, \up T'^nf) \\
		& = \max_{f\in\allgambles_1}\max_{x\in \states}d(\up T(\up T^{n-1}f|x), \up T'(\up T'^{n-1}f|x)) \\
		& = \max_{f\in\allgambles_1}\max_{x\in \states}d(\up T(\cdot|x)\up T^{n-1}f, \up T'(\cdot|x)\up T'^{n-1}f) \\
		& = \max_{x\in \states}d(\up T(\cdot|x)\up T^{n-1}, \up T'(\cdot|x)\up T'^{n-1}) \\
		\intertext{by Theorem~\ref{thm-diff-err}}
		& \le \max_{x\in \states}d(\up T(\cdot|x), \up T'(\cdot|x)) \rho_{n-1} + D_1\sum_{i=0}^{n-2}\rho_i \\
		& = D_1 \rho_{n-1} + D_1\sum_{i=0}^{n-2}\rho_i = D_1\sum_{i=0}^{n-1}\rho_i.
	\end{align}
\end{proof}

\begin{lem}\label{lem-sums}
	Let $\up T$ be an upper transition operator such that $\rho(\up T^r)=\rho_r =: \rho < 1$ and let $n = kr+m$, where $m<r$. Then 
	\begin{equation}
		\sum_{i=0}^{n-1}\rho_{i} \le r\frac{1-\rho^k}{1-\rho} + m\rho^k
	\end{equation}
	and 
	\begin{equation}
		\sum_{i=0}^{\infty}\rho_{i} \le \frac{r}{1-\rho}.
	\end{equation}		
\end{lem}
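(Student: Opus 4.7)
The plan is to exploit the submultiplicativity property $\rho(\up T\,\up S) \le \rho(\up T)\rho(\up S)$ listed after the definition of the weak coefficient of ergodicity. Iterating it gives $\rho_{a+b} \le \rho_a\rho_b$, and in particular $\rho_{jr+l} \le \rho_{jr}\rho_l \le \rho^j \rho_l$ for any $j \ge 0$ and $0 \le l < r$. The second ingredient is the trivial bound $\rho_l \le 1$, which follows directly from the definition $\rho(\up S) = \max_{x,y} d(\up S(\cdot|x),\up S(\cdot|y))$ together with the fact that for $f\in \allgambles_1$ the values $\up S(f|x),\up S(f|y)$ lie in $[0,1]$, so their difference is at most $1$.

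Next, I would split the index range $\{0,1,\dots,n-1\}$ with $n=kr+m$ into $k$ blocks of length $r$ plus a residual block of length $m$:
\begin{equation*}
\sum_{i=0}^{n-1}\rho_i = \sum_{j=0}^{k-1}\sum_{l=0}^{r-1}\rho_{jr+l} + \sum_{l=0}^{m-1}\rho_{kr+l}.
\end{equation*}
Applying the two ingredients above termwise gives $\rho_{jr+l} \le \rho^j$ in the full blocks and $\rho_{kr+l}\le \rho^k$ in the residual block. Summing over $l$ in each block turns the inner sums into $r\rho^j$ and $m\rho^k$ respectively, and the outer geometric series evaluates as $\sum_{j=0}^{k-1} r\rho^j = r(1-\rho^k)/(1-\rho)$, yielding the first claimed inequality.

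For the infinite sum I would decompose $\{0,1,2,\dots\} = \bigcup_{j=0}^\infty \{jr,\dots,(j+1)r-1\}$ and run the same estimate, obtaining $\sum_{i=0}^\infty \rho_i \le \sum_{j=0}^\infty r\rho^j = r/(1-\rho)$; this also follows by letting $k\to\infty$ (with $m$ bounded) in the first inequality and noting $\rho^k\to 0$ since $\rho<1$. There is essentially no obstacle beyond bookkeeping: the only nontrivial observations are submultiplicativity of $\rho$ along the subsequence $\rho_{jr}\le \rho^j$ and the universal upper bound $\rho_l\le 1$, and both are immediate from the material already developed.
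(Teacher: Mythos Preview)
Your proof is correct and follows essentially the same route as the paper's own argument: the paper also bounds $\rho_i \le \rho^{\lfloor i/r\rfloor}$ (using monotonicity/submultiplicativity of the weak coefficient of ergodicity together with $\rho_l\le 1$) and then sums the resulting geometric blocks. Your block decomposition is just an explicit way of carrying out that summation; there is no substantive difference.
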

\begin{proof}
	Clearly $\{\rho_n\}$ is a non-increasing sequence. Moreover, it follows directly from the definitions and monotonicity that 
	\begin{equation}\label{en-lem-sums-1}
		\rho_i \le \rho^{r\left[\frac ir \right]},
	\end{equation}
	 where $[\cdot]$ denotes the integer part. The required equations are now obtained by taking sums of the left and right hand sides of equation~\eqref{en-lem-sums-1}. 
\end{proof}
Let $E_\infty$ denote the distance between the limit distributions $\up E_\infty$ and $\up E'_\infty$. The following corollary is a direct consequence of the above results. 
\begin{cor}\label{cor-diff-err}
Using the notation from Theorem~\ref{thm-diff-err} and Lemma~\ref{lem-sums} we have the following inequalities
		\begin{align}\label{eq-lem-sums-1}
			E_n & \le E_0 \rho^{k} + D\left(r\frac{1-\rho^k}{1-\rho} + m\rho^k \right) \\ 
			E_\infty  & \le \frac{Dr}{1-\rho} \label{eq-lem-sums-2} \\
			\intertext{and}
			D_n & \le D_1 \left(r\frac{1-\rho^k}{1-\rho} + m\rho^k\right).
		\end{align}		
\end{cor}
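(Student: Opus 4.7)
The plan is to obtain all three inequalities by a direct assembly of the preceding results, with no new ideas required: Theorem~\ref{thm-diff-err} and Corollary~\ref{cor-op-err} supply the linear bounds in terms of $\rho_n$ and $\sum_{i=0}^{n-1}\rho_i$, while Lemma~\ref{lem-sums} converts those into the explicit closed-form expressions involving $k$, $m$ and $\rho$.

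For inequality \eqref{eq-lem-sums-1}, I would start from Theorem~\ref{thm-diff-err}, namely $E_n \le E_0\rho_n + D\sum_{i=0}^{n-1}\rho_i$. Writing $n = kr+m$ with $0\le m<r$, submultiplicativity of $\rho$ (property (i) of the weak coefficient of ergodicity) gives
\begin{equation*}
\rho_n = \rho(\up T^{kr+m}) \le \rho(\up T^r)^k \rho(\up T^m) \le \rho^k,
\end{equation*}
where the last step uses $\rho(\up T^m) \le 1$. The sum is bounded by Lemma~\ref{lem-sums} as $r(1-\rho^k)/(1-\rho) + m\rho^k$, and substituting both into the bound from Theorem~\ref{thm-diff-err} yields the claim.

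The third inequality is obtained in exactly the same way: Corollary~\ref{cor-op-err} reads $D_n \le D_1\sum_{i=0}^{n-1}\rho_i$, and a second application of Lemma~\ref{lem-sums} to the sum gives the stated bound. For the second inequality I would let $n\to\infty$ in \eqref{eq-lem-sums-1}. Since $0\le\rho<1$ and $k=[n/r]\to\infty$ while $m$ remains in $[0,r)$, the terms $E_0\rho^k$ and $Dm\rho^k$ both vanish and $r(1-\rho^k)/(1-\rho)\to r/(1-\rho)$. The left-hand side tends to $E_\infty = d(\up E_\infty, \up E'_\infty)$ because $\up E_n\to \up E_\infty$ and $\up E'_n\to \up E'_\infty$ under the unique convergence assumption made at the start of Section~\ref{ss-dbid}, and the distance $d$ is continuous (being a supremum over the compact set $\allgambles_1$ of a uniformly continuous expression in its arguments).

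No genuine obstacle is anticipated; the only point requiring care is the passage to the limit for $E_\infty$, where continuity of $d$ together with the hypothesis $\rho_n\to 0$ on both chains justifies exchanging limit and distance. The remaining computations are routine substitutions and do not require any additional lemma beyond what has already been established.
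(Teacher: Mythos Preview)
Your proposal is correct and matches the paper's approach: the paper simply states that the corollary is a direct consequence of Theorem~\ref{thm-diff-err}, Corollary~\ref{cor-op-err}, and Lemma~\ref{lem-sums}, and your argument carries out exactly this assembly, including the bound $\rho_n\le\rho^k$ (which is the special case $i=n$ of inequality~\eqref{en-lem-sums-1} in the proof of Lemma~\ref{lem-sums}). Your added justification of the limit passage for $E_\infty$ is more explicit than the paper, which leaves that step implicit.
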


\begin{rem}
	Notice that $D_\infty = E_\infty$ for every uniquely convergent Markov chain.  
\end{rem}

\subsection{Degree of imprecision}
Let $\{ X_n\}_{n\in\NN}$ be an imprecise Markov chain and let $\up E_n$ denote the upper expectation functionals corresponding to the imprecise distributions of $X_n$. As a measure of the degree of imprecision, we have suggested in Sc.~\ref{ss-dulo} $I_n = d(\up E_n, \low E_n)$. Our goal in this section is to find bounds on $I_n$ given the initial imprecision $I_0$ and the imprecision of the transition operator, given by $\hat I=d(\up T, \low T)$. 

Similarly as in Proposition~\ref{pr-diff-sum} we have the following. 
\begin{prop}\label{pr-sum-lu}
	Let $\up E_0$ and $\low E_0$ be a pair of lower an upper expectation functionals and $\up T$ and $\low T$ an upper and lower transition operators. Then we have that
	\begin{equation}
			\up E_0 \up T^n - \low E_0 \low T^n = \up E_0 \up T^n - \low E_0 \up T^n + \sum_{i=0}^{n-1} (\low E_0 \low T^i \up T \,\up T^{n-i-1} - \low E_0 \low T^i \low T \up T^{n-i-1}),
	\end{equation}
	and therefore 
	\begin{equation}
		d(\up E_0 \up T^n, \low E_0 \low T^n) = d(\up E_0 \up T^n, \low E_0 \up T^n) + \sum_{i=0}^{n-1} d (\low E_0 \low T^i \up T \,\up T^{n-i-1} , \low E_0 \low T^i \low T \up T^{n-i-1}).
	\end{equation}
	
\end{prop}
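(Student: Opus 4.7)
The statement is the lower/upper analogue of Proposition~\ref{pr-diff-sum}, and the natural approach is the same telescoping decomposition, now interpolating between $\low E_0 \up T^n$ and $\low E_0 \low T^n$ by progressively swapping the upper transition operator for its lower counterpart from the left. Everything takes place pointwise at a fixed test function $f\in\allgambles_1$, so one never has to pretend that $\up T, \low T, \up E_0, \low E_0$ compose linearly -- the identity lives at the level of real numbers after evaluation.

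The plan is first to fix $f\in\allgambles_1$ and, for $i=0,1,\dots,n$, define the hybrid value $F_i(f) := \low E_0(\low T^i\, \up T^{n-i} f)$. Then $F_0(f) = \low E_0(\up T^n f)$ and $F_n(f) = \low E_0(\low T^n f)$. A one-line computation gives $F_i(f)-F_{i+1}(f) = \low E_0(\low T^i\up T\,\up T^{n-i-1}f) - \low E_0(\low T^i\low T\,\up T^{n-i-1}f)$, matching the summand in the proposition. Telescoping produces $\low E_0(\up T^n f) - \low E_0(\low T^n f) = \sum_{i=0}^{n-1}(F_i(f)-F_{i+1}(f))$, and adding the ``initial-functional gap'' $\up E_0(\up T^n f) - \low E_0(\up T^n f)$ to both sides delivers the pointwise version of the claimed identity. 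Since this is true for every $f\in\allgambles_1$, we obtain the operator identity as stated.

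For the second displayed line (which should read $\le$ rather than $=$; compare the corresponding step in Proposition~\ref{pr-diff-sum}), I would take the maximum over $f\in\allgambles_1$ on both sides of the identity. The key observation is that each individual difference entering the decomposition is already nonnegative: monotonicity of $\up T$ and $\low T$ gives $\up T^k g \ge \low T^k g$ pointwise for any $g$, and upper expectations dominate lower expectations on the same credal set, hence $\up E_0(\up T^n f)\ge \low E_0(\up T^n f)\ge \low E_0(\low T^n f)$ and similarly for every summand. This means the one-sided distance $d(\up A,\low B)=\max_{f\in\allgambles_1}\up A(f)-\low B(f)$ from Sc.~\ref{ss-dulo} bounds each term of the decomposition without absolute-value issues, and subadditivity of $\max$ over a finite sum produces the desired inequality.

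There is no real obstacle in this proof; the only bookkeeping point worth checking is that the ``test function'' appearing in each summand after the supremum is taken, namely $\up T^{n-i-1} f$, still lies in $\allgambles_1$, so that the distance bound actually applies. This is immediate from the boundedness axiom $\min f \le \up T f \le \max f$, applied inductively, which keeps the iterated images of $\allgambles_1$ inside $\allgambles_1$. Everything else reuses the identities and monotonicity facts already established earlier in the paper.
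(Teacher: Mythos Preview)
Your proposal is correct and is exactly the intended argument: the paper gives no explicit proof of Proposition~\ref{pr-sum-lu} (nor of the parallel Proposition~\ref{pr-diff-sum}), treating the telescoping identity as self-evident, and your hybrid quantities $F_i(f)=\low E_0(\low T^i\,\up T^{n-i}f)$ make that telescoping explicit in the only natural way. Your observation that the second display should carry $\le$ rather than $=$ is well taken and consistent with how the paper states the analogous second line of Proposition~\ref{pr-diff-sum}; your justification via nonnegativity of each summand and subadditivity of the supremum is precisely what is needed.
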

\begin{thm}\label{thm-diff-impr}
	Let  $I_n = d(\up E_n, \low E_n)$, $\hat I=d(\up T, \low T)$ and $\rho_n = \rho(\up T^n)$. 
	The following inequality holds:	
	\begin{equation}
	I_n \le I_0 \rho_n + \hat I\sum_{i=0}^{n-1}\rho_{i}. 
	\end{equation}	
\end{thm}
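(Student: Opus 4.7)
The plan is to mirror the telescoping strategy used in Theorem~\ref{thm-diff-err}, replacing Proposition~\ref{pr-diff-sum} by Proposition~\ref{pr-sum-lu} and \eqref{eq-dist-ET} by its upper/lower analogue Proposition~\ref{pr-ergodicity-lu}.

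First I would apply Proposition~\ref{pr-sum-lu} and the triangle inequality to get
\[
I_n \le d(\up E_0\up T^n,\low E_0\up T^n) + \sum_{i=0}^{n-1} d\bigl(\low E_0\low T^i\up T\,\up T^{n-i-1},\low E_0\low T^i\low T\,\up T^{n-i-1}\bigr).
\]
The first term would be bounded by invoking Proposition~\ref{pr-ergodicity-lu} with $\up T^n$ (itself an upper transition operator) in the role of $\up T$, yielding $d(\up E_0,\low E_0)\rho(\up T^n) = I_0\rho_n$.

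For the $i$-th summand I would peel off the outer functional $F = \low E_0\low T^i$ using the general estimate $|F(f)-F(g)| \le d(f,g)$. This non-expansiveness is automatic: the dual of \eqref{eq-norm1-ue} via $\low E(h) = -\up E(-h)$ gives non-expansiveness of $\low E_0$, a row-wise application gives the analogue for $\low T$, and non-expansive maps compose. The summand is thus bounded by $d(\up T\,\up T^{n-i-1},\low T\,\up T^{n-i-1})$. Taking the maximum over $x\in\states$ reduces this to distances of the form $d(\up T(\cdot|x)\,\up T^{n-i-1},\low T(\cdot|x)\,\up T^{n-i-1})$, and since $\up T(\cdot|x)$ and $\low T(\cdot|x)$ are genuine upper and lower expectation functionals, Proposition~\ref{pr-ergodicity-lu} applies row-by-row and bounds each by $d(\up T(\cdot|x),\low T(\cdot|x))\rho_{n-i-1} \le \hat I\rho_{n-i-1}$. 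Summing in $i$ and reindexing then gives $\hat I\sum_{i=0}^{n-1}\rho_i$.

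The delicate step is the middle one, where Proposition~\ref{pr-ergodicity-lu} is not directly applicable to the outer composition $\low E_0\low T^i$: the two arguments of the summand differ in an \emph{inner} operator ($\up T$ versus $\low T$), not in an outer functional, so the ergodicity coefficient cannot be introduced at that stage. The key observation is that one does not need coherence of $\low E_0\low T^i$ at all; only its Chebyshev non-expansiveness, which survives arbitrary composition, is required. Once the inner operators are freed from this outer wrapping, Proposition~\ref{pr-ergodicity-lu} applies to the remaining one-step comparison and the calculation closes in the same bookkeeping as Theorem~\ref{thm-diff-err}.
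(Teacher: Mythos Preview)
Your proposal is correct and follows essentially the same route as the paper: apply Proposition~\ref{pr-sum-lu}, bound the first term by Proposition~\ref{pr-ergodicity-lu}, strip off the outer functional $\low E_0\low T^i$ via non-expansiveness (the paper invokes \eqref{eq-norm1-ue} directly), and then apply Proposition~\ref{pr-ergodicity-lu} row-wise to the remaining inner comparison. Your explicit remark that only non-expansiveness of the composite $\low E_0\low T^i$ is needed (not its coherence or any ergodicity property) is a welcome clarification of a step the paper leaves implicit.
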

\begin{proof}
	It follows directly from Proposition~\ref*{pr-ergodicity-lu} that 
	\begin{equation}
		d(\up E_0 \up T^n, \low E_0 \up T^n) \le I_0\rho_n. 
	\end{equation}		
	Further we have that for every $0\le i \le n-1$
	\begin{align}
		d(\low E_0 \low T^i \up T\, \up T^{n-i-1}, \low E_0 \low T^i \low T \up T^{n-i-1}) &= \max_{f\in \allgambles_1}| \low E_0[\low T^i \up T\,\up T^{n-i-1} f] - \low E_0[\low T^i \low T\up T^{n-i-1} f] | 
		\intertext{which is by definition, and by replacing $\low E_i = \low E_0 \low T^i$ we have}
		& = \max_{f\in \allgambles_1}| \low E_i[\up T\,\up T^{n-i-1} f] - \low E_i[\low T\up T^{n-i-1} f] | 
		\intertext{by \eqref{eq-norm1-ue} }
		& = \max_{f\in \allgambles_1} d(\up T\,\up T^{n-i-1} f,  \low T\up T^{n-i-1} f)  \\
		& = \max_{f\in \allgambles_1}\max_{x\in\states} d(\up T(\up T^{n-i-1} f|x),  \low T(\up T^{n-i-1} f|x))  \\
		& = \max_{x\in\states} d(\up T(\cdot |x)\low T^{n-i-1},  \low T(\cdot |x)\low T^{n-i-1}) \\ 
		\intertext{by Proposition~\ref{pr-ergodicity-lu} }
		& = \max_{x\in\states} d(\up T(\cdot |x),  \low T(\cdot |x))\rho_{n-i-1} \\ 
		& = \hat I\rho_{n-i-1}.
	\end{align}
	Now the required inequality follows directly by combining the above inequalities.
\end{proof}
The following corollaries now follow immediately using similar reasoning as in the case of distances in Sc.~\ref{ss-dbid}. 
\begin{cor}
	Let $\up T$ be an upper transition operator and denote $\hat I_n = d(\up T^n, \low T^n)$. The following inequality holds:
	\[ \hat I_n \le \hat I_1\sum_{i=0}^{n-1}\rho_i. \] 
\end{cor}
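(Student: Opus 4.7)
The plan is to mirror the proof of Corollary~\ref{cor-op-err}, but with Theorem~\ref{thm-diff-impr} playing the role that Theorem~\ref{thm-diff-err} played there. The two statements have the same shape: bound the deviation of the $n$-step operator in terms of $\hat I_1$ and the partial sums $\sum_{i=0}^{n-1}\rho_i$, where $\hat I_1 = d(\up T, \low T) = \hat I$.

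First I would unfold the definition of $d(\up T^n, \low T^n)$ as a double maximum over $f\in\allgambles_1$ and $x\in\states$, and then rewrite it (as in the proof of Corollary~\ref{cor-op-err}) as
\begin{equation*}
\hat I_n = \max_{x\in\states} d\bigl(\up T(\cdot|x)\,\up T^{n-1},\; \low T(\cdot|x)\,\low T^{n-1}\bigr),
\end{equation*}
so that for each fixed $x$ the row functionals $\up T(\cdot|x)$ and $\low T(\cdot|x)$ take the role of an initial upper/lower pair of expectation functionals, and we are looking at their evolution after $n-1$ transition steps.

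Next I would apply Theorem~\ref{thm-diff-impr} to this pair with initial imprecision $I_0 = d(\up T(\cdot|x),\low T(\cdot|x))$ and chain length $n-1$, obtaining
\begin{equation*}
d\bigl(\up T(\cdot|x)\,\up T^{n-1},\; \low T(\cdot|x)\,\low T^{n-1}\bigr) \le d(\up T(\cdot|x),\low T(\cdot|x))\,\rho_{n-1} + \hat I\sum_{i=0}^{n-2}\rho_i.
\end{equation*}
Taking the maximum over $x\in\states$ on both sides uses the identity $\max_{x\in\states} d(\up T(\cdot|x),\low T(\cdot|x)) = \hat I = \hat I_1$, and gathering the $\rho_{n-1}$ term into the remaining sum yields exactly $\hat I_1\sum_{i=0}^{n-1}\rho_i$.

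The only delicate step is the first one, verifying that the max over $f$ and over $x$ can be rearranged so that the inner quantity really is the distance between $\up T(\cdot|x)$ and $\low T(\cdot|x)$ evolved under $\up T^{n-1}$ and $\low T^{n-1}$ respectively; but this is the same manipulation used in the proof of Corollary~\ref{cor-op-err}, and everything else is a mechanical substitution into an already-proved theorem, so I expect no real obstacle.
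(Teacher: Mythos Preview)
Your proposal is correct and is exactly the argument the paper has in mind: the paper gives no explicit proof here, merely stating that the corollary follows ``using similar reasoning as in the case of distances in Sc.~\ref{ss-dbid}'', i.e.\ by mirroring the proof of Corollary~\ref{cor-op-err} with Theorem~\ref{thm-diff-impr} in place of Theorem~\ref{thm-diff-err}. The rearrangement step you flag as delicate is indeed the same one carried out in that earlier proof, with the only cosmetic difference being that the upper--lower distance $d(\up E_1,\low E_2)$ involves $\up E_1(f)-\low E_2(f)$ rather than an absolute value; this causes no trouble.
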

\begin{cor}
Using the notation from Theorem~\ref{thm-diff-impr} and Lemma~\ref{lem-sums} we have the following inequalities
	\begin{align}\label{eq-lem-sums-3}
	I_n & \le I_0 \rho^{k} + \hat I\left(r\frac{1-\rho^k}{1-\rho} + m\rho^k \right) \\ 
	I_\infty  & \le \frac{\hat Ir}{1-\rho}  \\
	\intertext{and}
	\hat I_n & \le \hat I_1 \left(r\frac{1-\rho^k}{1-\rho} + m\rho^k\right).
	\end{align}		
\end{cor}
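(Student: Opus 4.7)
The plan is to combine Theorem~\ref{thm-diff-impr}, the preceding corollary for $\hat I_n$, and the arithmetic bounds of Lemma~\ref{lem-sums}, exactly in parallel with the derivation of Corollary~\ref{cor-diff-err} from Theorem~\ref{thm-diff-err} and Lemma~\ref{lem-sums} in Sc.~\ref{ss-dbid}. Since the three desired inequalities have the same structural form as the ones already established for $E_n$, $E_\infty$ and $D_n$, the work reduces to substituting $I_0$ for $E_0$ and $\hat I$ for $D$ in the earlier estimates.

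First I would write $n = kr + m$ with $0 \le m < r$ as in the hypothesis of Lemma~\ref{lem-sums}. By inequality \eqref{en-lem-sums-1} in the proof of that lemma we have $\rho_n \le \rho^{r\lfloor n/r\rfloor} = \rho^{k}$ whenever $m < r$, and
\[
\sum_{i=0}^{n-1}\rho_i \;\le\; r\,\frac{1-\rho^k}{1-\rho} + m\rho^k.
\]
Plugging these two bounds into Theorem~\ref{thm-diff-impr}, which states $I_n \le I_0\rho_n + \hat I\sum_{i=0}^{n-1}\rho_i$, yields the first inequality
\[
I_n \;\le\; I_0\rho^{k} + \hat I\!\left(r\,\frac{1-\rho^k}{1-\rho} + m\rho^k\right).
\]
An entirely analogous substitution into the preceding corollary $\hat I_n \le \hat I_1\sum_{i=0}^{n-1}\rho_i$ gives the third inequality.

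For the middle inequality I would pass to the limit $n\to\infty$. Since $0\le\rho<1$, letting $k\to\infty$ in the bound for $I_n$ makes the term $I_0\rho^k$ vanish and the bracketed expression converge to $r/(1-\rho)$ (this is also exactly the second statement of Lemma~\ref{lem-sums}). Because $d$ is continuous on the relevant compact sets and the functionals $\up E_n,\low E_n$ converge uniquely by the ergodicity hypothesis (so $I_\infty = \lim_n I_n$ exists), this yields $I_\infty \le \hat I\,r/(1-\rho)$.

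There is really no hard step: the whole argument is a mechanical transplant of the proof of Corollary~\ref{cor-diff-err} with Theorem~\ref{thm-diff-err} replaced by Theorem~\ref{thm-diff-impr} and the preceding $\hat I_n$-corollary. The only mild point worth being careful about is justifying the interchange of limit and distance when deriving the bound on $I_\infty$, but on a finite state space $\states$ the sets $\allgambles_1$ and the relevant credal sets are compact and $d$ is continuous, so the limit passes through without difficulty.
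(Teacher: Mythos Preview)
Your proposal is correct and follows exactly the approach the paper intends: the paper gives no explicit proof for this corollary, stating only that it ``follow[s] immediately using similar reasoning as in the case of distances in Sc.~\ref{ss-dbid},'' i.e.\ by transplanting the argument for Corollary~\ref{cor-diff-err} with Theorem~\ref{thm-diff-impr} in place of Theorem~\ref{thm-diff-err}. One cosmetic slip: you wrote $\rho_n\le\rho^{r\lfloor n/r\rfloor}=\rho^k$, but the exponent should be $\lfloor n/r\rfloor=k$, not $r\lfloor n/r\rfloor$; your conclusion $\rho_n\le\rho^k$ is nonetheless the right one.
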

\begin{rem}
	It is again clear that for every uniquely convergent imprecise Markov chain $\hat I_\infty = I_\infty$. 
\end{rem}
%For sets of transition operators $\mathcal T_1, \mathcal T_2$ the maximal distance can be defined by taking
%\[ \up d(\mathcal T_1, \mathcal T_2) = \max_{x\in\states}\up d(\mathcal T_1(\cdot|x), \mathcal T_2(\cdot|x)). \]
%
%Further, let $\cset$ be a credal set and $\up T$ an upper transition operator. Then we can assign the set $\cset \up T = \{ E\up T\colon E\in\cset \}$. 
%
%The following proposition now holds.
%\begin{prop}
%	The following inequalities hold:
%	\begin{enumerate}[{(i)}]
%		\item $\up d(\cset_1 \up T, \cset_1\up T) \le \up d(\cset_1, \cset_2)\rho(\up T)$;
%		\item $\up d(\cset_1 \up T, \cset_1\up T) \le \up d(\cset_1, \cset_2)\rho(\up T)$;
%	\end{enumerate}
%\end{prop}

\section{Examples}\label{s-ex}
\subsection{Contamination models}
Let $\up E$ be an upper expectation functional and $\varepsilon>0$. Then we consider the \emph{$\varepsilon$-contaminated} upper expectation functional 
	\begin{equation}
	 \up E_\varepsilon (f) = (1-\varepsilon)\up E (f) + \varepsilon f_{\mathrm{max}}, 
	\end{equation}	
where $f_{\mathrm{max}} = \max_{x\in\states} f(x) =: \up V(f)$. Note that $\up V$ is the upper expectation functional whose credal set consists of all expectation functionals on $\allgambles(\states)$. It is called the \emph{vacuous upper prevision}. The upper transition operator $\up T_V(f) = f_{\mathrm{max}} 1_\states$ is called the \emph{vacuous upper transition operator}. 

Being a convex combination of $\up E$ and the vacuous upper prevision $\up V, \up E_\varepsilon$ is itself also an upper prevision. Similarly we could define an $\varepsilon$-contaminated upper transition operator with 
	\begin{equation}
	\up T_\varepsilon f = (1-\varepsilon)\up Tf + \varepsilon 1_\states f_{\mathrm{max}} = (1-\varepsilon)\up Tf + \varepsilon \up T_Vf. 
	\end{equation}
Let $\rho = \rho(\up T)$. Then we can explicitly find the coefficients of ergodicity for the contaminated model. 
\begin{prop}\label{prop-eps-cont}
	Let $\up E$ be an upper expectation functional, $\up T$ an upper transition operator and $\up E_\varepsilon$ and $\up T_\varepsilon$ the corresponding $\varepsilon$-contaminated models. The following inequalities hold:
	\begin{enumerate}[{(i)}]
		\item $d(\up E, \up E_\varepsilon) = \varepsilon d(\up E, \up V)$, where $\up V$ is the vacuous upper prevision;
		\item $d(\up T, \up T_\varepsilon) = \varepsilon d(\up T, \up T_V)$, where $\up T_V$ is the vacuous upper transition operator;
		\item $d(\up E'_\varepsilon, \up E_\varepsilon) = (1-\varepsilon)d(\up E', \up E)$;
		\item $d(\up T_\varepsilon, \up T'_\varepsilon) = (1-\varepsilon)d(\up T, \up T')$;
		\item $\rho(\up T_\varepsilon) = (1-\varepsilon)\rho(\up T)$; 
		\item $d(\up E_\varepsilon, \low E_\varepsilon) = (1-\varepsilon)d(\up E, \low E) + \varepsilon$; 
		\item $\hat I(\up T_\varepsilon) = (1-\varepsilon)\hat I(\up T) + \varepsilon$. 
	\end{enumerate}
\end{prop}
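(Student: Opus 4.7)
The plan is to reduce all seven claims to direct algebraic manipulations with the definition of $\varepsilon$-contamination; the only genuinely non-trivial point sits in part (vi). For a generic gamble $f$, the pointwise identities
\begin{equation}
\up E_\varepsilon(f) - \up E(f) = \varepsilon\bigl(\up V(f) - \up E(f)\bigr), \qquad \up E'_\varepsilon(f) - \up E_\varepsilon(f) = (1-\varepsilon)\bigl(\up E'(f) - \up E(f)\bigr)
\end{equation}
follow at once from the convex combination defining $\up E_\varepsilon$, with entirely analogous identities for the contaminated transition operators. Taking absolute values on both sides and maximising over $f \in \allgambles_1$ (and, in the operator case, also over $x \in \states$) yields (i)--(iv) immediately.

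For (v), I would use that $\up T_V(f|x) = f_{\mathrm{max}}$ does not depend on the starting state $x$, so the vacuous contribution cancels out of the row-difference $\up T_\varepsilon(f|x) - \up T_\varepsilon(f|y)$, leaving only $(1-\varepsilon)\bigl(\up T(f|x) - \up T(f|y)\bigr)$; maximising over $f$, $x$, $y$ then gives $\rho(\up T_\varepsilon) = (1-\varepsilon)\rho(\up T)$. Once (vi) is established, (vii) is immediate: apply (vi) to each conditional pair $\up T(\cdot|x), \low T(\cdot|x)$ and take the maximum over $x \in \states$, invoking $\hat I(\up T) = \max_x d(\up T(\cdot|x), \low T(\cdot|x))$.

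The real work lies in (vi). Direct expansion (using $\low E_\varepsilon(f) = -\up E_\varepsilon(-f)$) gives
\begin{equation}
\up E_\varepsilon(f) - \low E_\varepsilon(f) = (1-\varepsilon)\bigl(\up E(f) - \low E(f)\bigr) + \varepsilon\bigl(\max f - \min f\bigr),
\end{equation}
and over $f \in \allgambles_1$ the two summands are respectively bounded by $(1-\varepsilon)d(\up E, \low E)$ and $\varepsilon$. The hard part will be to show these upper bounds are simultaneously attained by a common $f$, so that the maximum of the sum equals the sum of the individual maxima. My plan is to invoke the proposition of Section~\ref{ss-dulo} guaranteeing that $d(\up E, \low E)$ is already attained on some indicator $1_A$; provided $|\states| \ge 2$, a maximising $A$ can be chosen non-empty and proper, in which case $\max 1_A - \min 1_A = 1$ and both summands reach their respective bounds at $f = 1_A$. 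This yields the claimed equality and closes out the remaining parts of the proposition.
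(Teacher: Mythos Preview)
Your arguments for (i)--(v) and (vii) are correct and coincide with the paper's own treatment: straightforward algebra with the convex-combination definition, plus the observation that the vacuous row contribution is independent of $x$ so it cancels in (v). The paper likewise derives (v) from (iii) applied to the row functionals $\up T(\cdot|x)$ and $\up T(\cdot|y)$, which is exactly your cancellation argument in different words.

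The only substantive difference is in (vi). You reach the sharp equality by invoking the proposition from Section~\ref{ss-dulo} that $d(\up E,\low E)$ is attained on an indicator $1_A$, and then noting that a maximising $A$ can be taken non-empty and proper (since $A=\emptyset$ and $A=\states$ both give zero), so that $\max 1_A-\min 1_A=1$ and both summands are simultaneously maximised. The paper instead argues by rescaling: if $f\in\allgambles_1$ maximises $\up E_\varepsilon(f)-\low E_\varepsilon(f)$, replace it by $\tilde f=(f-f_{\min})/(f_{\max}-f_{\min})\in\allgambles_1$; constant additivity and positive homogeneity give $\up E(\tilde f)-\low E(\tilde f)=(f_{\max}-f_{\min})^{-1}(\up E(f)-\low E(f))$, while $\tilde f_{\max}-\tilde f_{\min}=1$, so maximality forces $f_{\max}-f_{\min}=1$ and the claimed identity follows. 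Your route is cleaner in that it reuses an already-proved structural result and avoids the ad hoc rescaling, at the cost of an explicit appeal to $|\states|\ge 2$; the paper's rescaling argument is self-contained and does not cite the earlier proposition, but tacitly needs the same non-triviality assumption to ensure a non-constant maximiser exists. Both arguments are valid and yield the same conclusion.
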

\begin{proof}
	(i) follows directly from
	\begin{align}
		d(\up E, \up E_\varepsilon) & = \max_{f\in \allgambles_1} | \up E(f) - (1-\varepsilon)\up E(f) - \varepsilon \up V(f) | \\
		& = \max_{f\in \allgambles_1} | \varepsilon(\up E(f) - \up V(f)) | \\
		& = \varepsilon d(\up E, \up V)
	\end{align}
	and simply leads to (ii). To see (iii) we calculate
	\begin{align}
	d(\up E_\varepsilon, \up E'_\varepsilon) & = \max_{f\in \allgambles_1} | (1-\varepsilon)\up E(f) + \varepsilon \up V(f) - (1-\varepsilon)\up E'(f) - \varepsilon \up V(f) | \\
	& = \max_{f\in \allgambles_1} | \varepsilon(\up E(f) - \up E'(f)) | \\
	& = \varepsilon d(\up E, \up E').
	\end{align}
	(iv) and (v) are direct consequences of (iii) and the definitions. To see (vi) note that $\low V(f) = f_{\mathrm{min}}$ and therefore $\low E_\varepsilon(f) = (1-\varepsilon)\low E(f) + \varepsilon \low V(f)$. For some $f$ we then have
	\begin{align}
		\up E_\varepsilon(f) - \low E_\varepsilon(f) & = (1-\varepsilon) (\up E(f) - \low E(f)) + \varepsilon (f_{\mathrm{max}} - f_{\mathrm{min}})
	\end{align}
	Now suppose the maximal difference in the above expression is attained for some $f\in \allgambles_1$ and denote $\tilde f = \dfrac{f-f_{\mathrm{min}}}{f_{\mathrm{max}}-f_{\mathrm{min}}}$ which belongs to $\allgambles_1$ as well. 	
	It is directly verified that $\up E(\tilde f)-\low E(\tilde f) = \dfrac{1}{f_{\mathrm{max}}-f_{\mathrm{min}}}(\up E(f)-\low E(f))$. Thus, $f_{\mathrm{max}}-f_{\mathrm{min}} = 1$ must hold, because of maximality of $f$, and therefore (vi) easily follows.  
	
	(vii) is also a simple consequence of (vi). 
\end{proof}
\begin{thm}
	Let $\up E$ and $\up T$ be an upper expectation functional and an upper transition operator respectively, and $\up E_\varepsilon$ and $\up T_\varepsilon$ the corresponding $\varepsilon$-contaminated operators. Denote $E_n = d(\up E_{\varepsilon n}, \up E_n)$, where $\up E_{\varepsilon n} = \up E_{\varepsilon}\up T_\varepsilon^n$ and $\up E_{n} = \up E\,\up T^n$,  $\Delta_1 = d(\up E, \up V), \Delta_2 = d(\up T, \up T_V), \hat I = \hat I(\up T), \rho = \rho(\up T)$ and the imprecision of the contaminated chain by $I'_n = d(\up E_{\varepsilon n}, \low E_{\varepsilon n})$. 
	
	Then we have that $E_0 = \varepsilon \Delta_1, D=\varepsilon \Delta_2$ and $\hat I(\up T_\varepsilon) = (1-\varepsilon)\hat I(\up T) + \varepsilon$. The following inequalities hold:
	\begin{align}
		E_n & \le \varepsilon \Delta_1\rho^n(1-\varepsilon)^n + \varepsilon \Delta_2 \frac{1-\rho^n (1-\varepsilon)^n}{1-\rho(1-\varepsilon)}; \\
		E_\infty & \le \frac{\varepsilon \Delta_2}{1-\rho(1-\varepsilon)}; \\
		I'_n & \le ((1-\varepsilon)I_0+\varepsilon)(1-\varepsilon)^n\rho^n +  ((1-\varepsilon)\hat I + \varepsilon)\frac{1-\rho^n(1-\varepsilon)^n}{1-\rho (1-\varepsilon)}; \\
		I'_\infty & \le \frac{(1-\varepsilon)\hat I + \varepsilon}{1-\rho(1-\varepsilon)}.
	\end{align}	
\end{thm}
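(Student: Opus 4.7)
The plan is to reduce every inequality in the statement to a direct application of either Theorem~\ref{thm-diff-err} or Theorem~\ref{thm-diff-impr}, with the contaminated operator $\up T_\varepsilon$ playing the role of the ``first'' operator in each case. The preliminary identities $E_0=\varepsilon\Delta_1$, $D=\varepsilon\Delta_2$, and $\hat I(\up T_\varepsilon)=(1-\varepsilon)\hat I+\varepsilon$ stated in the theorem are immediate restatements of Proposition~\ref{prop-eps-cont}(i), (ii), and (vii). The other ingredient I need is the identity $\rho(\up T_\varepsilon)=(1-\varepsilon)\rho$ from Proposition~\ref{prop-eps-cont}(v), combined with submultiplicativity of the weak coefficient of ergodicity to obtain $\rho(\up T_\varepsilon^n)\le \rho(\up T_\varepsilon)^n=(1-\varepsilon)^n\rho^n$ for every $n$.

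For the bound on $E_n$, I apply Theorem~\ref{thm-diff-err} to the pair $(\up T_\varepsilon,\up T)$, reading the contaminated operator as the ``first'' one --- this assignment matters because the $\rho_n$ appearing in the theorem refers to the first operator. Substituting $E_0=\varepsilon\Delta_1$, $D=\varepsilon\Delta_2$, and the bound $\rho_n\le(1-\varepsilon)^n\rho^n$ yields
\begin{equation*}
E_n \le \varepsilon\Delta_1(1-\varepsilon)^n\rho^n + \varepsilon\Delta_2\sum_{i=0}^{n-1}(1-\varepsilon)^i\rho^i,
\end{equation*}
and evaluating the geometric sum gives the stated inequality. The bound on $E_\infty$ is obtained by letting $n\to\infty$; since $\varepsilon>0$ we have $(1-\varepsilon)\rho<1$, so the first term vanishes and the series converges to $1/(1-(1-\varepsilon)\rho)$.

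The two imprecision bounds are handled in exactly the same way, but using Theorem~\ref{thm-diff-impr} applied to the contaminated chain itself (with $\up T_\varepsilon$ again in the role of $\up T$). The initial imprecision of the contaminated chain is $d(\up E_\varepsilon,\low E_\varepsilon)=(1-\varepsilon)I_0+\varepsilon$ by Proposition~\ref{prop-eps-cont}(vi), the transition imprecision is $(1-\varepsilon)\hat I+\varepsilon$ by (vii), and once more $\rho(\up T_\varepsilon^n)\le(1-\varepsilon)^n\rho^n$. Substituting these three quantities into Theorem~\ref{thm-diff-impr} and summing the same geometric series reproduces the bound for $I'_n$, and passing to the limit gives the bound for $I'_\infty$.

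There is no serious obstacle here --- everything is bookkeeping plus one geometric summation. The only place that requires care is keeping track of which chain plays the ``first'' role when invoking Theorem~\ref{thm-diff-err}, and noticing that the chain of inequalities $\rho(\up T_\varepsilon^n)\le\rho(\up T_\varepsilon)^n=(1-\varepsilon)^n\rho^n$ is exactly what makes the contraction rate of the contaminated chain better than that of the original, which in turn is what drives convergence of the $E_\infty$ and $I'_\infty$ bounds to finite values even when $\rho=1$.
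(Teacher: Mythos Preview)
Your proposal is correct and follows essentially the same route as the paper: the paper's proof simply cites Theorems~\ref{thm-diff-err} and~\ref{thm-diff-impr}, Proposition~\ref{prop-eps-cont}, and Corollary~\ref{cor-diff-err}. The only cosmetic difference is that you bound $\rho(\up T_\varepsilon^{\,i})\le((1-\varepsilon)\rho)^i$ via submultiplicativity and sum the geometric series by hand, whereas the paper packages the same computation by invoking Corollary~\ref{cor-diff-err} (which here specializes to $r=1$, $k=n$, $m=0$); your remark that the contaminated chain must play the role of the ``first'' operator so that its smaller ergodicity coefficient $(1-\varepsilon)\rho$ appears is exactly the point that makes the substitution work.
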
 
\begin{proof}
	A direct consequence of Theorems~\ref{thm-diff-err} and \ref{thm-diff-impr}, Proposition~\ref{prop-eps-cont} and Corollary~\ref{cor-diff-err}.
\end{proof}

\subsection{Numerical example}
We again consider a Markov chain with the initial lower and upper probabilities and transition probabilities as in Example~\ref{ex-1}. We compare it with a perturbed chain whose lower and upper transition matrices are
\begin{align}
\low M' = 
\begin{bmatrix}
0.32 & 0.36 & 0 \\
0.36 & 0.19 & 0.24 \\
0 & 0.5 & 0.4 
\end{bmatrix} \qquad \text{and} \qquad 
\up M' = 
\begin{bmatrix}
0.64 & 0.68 & 0 \\
0.57 & 0.38 & 0.45 \\
0.04 & 0.56 & 0.46 
\end{bmatrix}
\end{align}
and the initial probability bounds are 
\begin{equation}
\low P'_0 = (0.32, 0.21, 0.28) \qquad \text{and} \qquad  \up P'_0 = (0.42, 0.38, 0.42) 
\end{equation}
Coefficients of ergodicity are $\rho(\up T) = 0.67$ and $\rho(\up T') = 0.60$, 
and the distance between initial imprecise probability models is $d(\up E_0, 
\up E'_0) = 0.0248$, and between transition operators $d(\up T, \up T') = 
0.05$. 

The maximal theoretically possible bounds $d(E_n, E'_n)$ can be obtained using 
Theorem~\ref{thm-diff-err}, with $E_0 = 0.0248, D=0.05$ and $\rho_n=\rho(\up 
T')^n = 0.60^n$. For comparison we have calculated lower and upper transition 
probability matrices and the distances based on these estimates. The results are listed in Figure~\ref{fig-distances-fnals}. The actual 
distances may be larger because the expectation functionals are not fully 
described by probability interval models (PRI).   
\begin{figure}[h!]
	\caption{Distances between $\up E_n$ and $\up E'_n$ based on PRI estimates 
	and their theoretical upper bounds.} \label{fig-distances-fnals} 
	\[
	\begin{array}{l|rrrr}
	\toprule
	n									& 1 & 2 & 3 & \infty \\\hline 
	\text{PRI-distance}					& 0.0248 & 0.0387 & 0.0429 & 0.0467 \\ 
	\text{maximal theoretical distance}	& 0.0740 & 0.0889 & 0.1034 & 0.1250 \\
	\bottomrule
	\end{array}
	\]

\end{figure}

In Figure~\ref{fig-distances-ops} the distances between the operators $\up T^n$ 
and $\up T'^n$ are given, together with their upper bounds calculated using 
Corollary~\ref{cor-op-err}.
\begin{figure}
	\caption{Distances between $\up T^n$ and $\up T'^n$ based on PRI estimates 
	and their theoretical upper bounds.} \label{fig-distances-ops} 
	\[
	\begin{array}{l|rrrr}
	\toprule
	n									& 2 & 3 & 4 & \infty \\\hline 
	\text{PRI-distance}					& 0.0454 & 0.0499 & 0.0484 & 0.0467 \\ 
	\text{maximal theoretical distance}	& 0.0800 & 0.0980 & 0.1088 & 0.1250 \\
	\bottomrule
	\end{array}
	\]
	
\end{figure}

\section{Conclusions and further work}
We have studied the impact of perturbations of initial imprecise probability distributions and transition operators of imprecise Markov chains on the deviations of distributions of the chain at further steps. The results show that stability of the distributions depends on the weak coefficient of ergodicity, which is consistent with the known results for precise Markov chains \cite{Mitrophanov2005}. By the same means we give the bounds on the degree of imprecision depending on the imprecision of initial distribution and transition operators. 

Our goal in the future is to extend the results to related models, such as continuous time imprecise Markov chains, hidden Markov models or semi-Markov models. 

\subsection*{Acknowledgement}
The author would like to thank Alexander Y. Mitrophanov for his helpful suggestions and discussions. 

\bibliography{refer.bib}

\end{document}